\theoremstyle{change}%
\newtheorem{definition}{Definition:}[section]%
\newtheorem{proposition}[definition]{Proposition:}%
\newtheorem{theorem}[definition]{Theorem:}%
\newtheorem{lemma}[definition]{Lemma:}%
\newtheorem{corollary}[definition]{Corollary:}%
{\theorembodyfont{\rmfamily}\newtheorem{remark}[definition]{Remark:}}%
{\theorembodyfont{\rmfamily}\newtheorem{example}[definition]{Example:}}%
\newenvironment{proof}
  {{\bf Proof:}}
  {\qquad \hspace*{\fill} $\Box$}%
\newcommand{\fa}{\mathfrak{a}}%
\newcommand{\fg}{\mathfrak{g}}%
\newcommand{\fk}{\mathfrak{k}}%
\newcommand{\fn}{\mathfrak{n}}%
\newcommand{\fz}{\mathfrak{z}}%
\newcommand{\fh}{\mathfrak{h}}%
\newcommand{\Ad}{\operatorname{Ad}}%
\newcommand{\ad}{\operatorname{ad}}%
\newcommand{\id}{\operatorname{id}}
\newcommand{\inner}{\operatorname{int}}%
\newcommand{\rme}{\mathrm{e}}%
\newcommand{\CC}{\mathcal{C}}%
\newcommand{\UC}{\mathcal{U}}%
\newcommand{\DC}{\mathcal{D}}%
\newcommand{\XC}{\mathcal{X}}%
\newcommand{\AC}{\mathcal{A}}%
\newcommand{\N}{\mathbb{N}}%
\newcommand{\R}{\mathbb{R}}%
\newcommand{\C}{\mathbb{C}}%
\begin{document}

\title{Controllability of linear systems on solvable Lie groups}
\author{Adriano Da Silva\footnote{Supported by Fapesp grant $n^o$ 2013/19756-8}\\Instituto de Matem\'atica,\\
Universidade Estadual de Campinas\\
Cx. Postal 6065, 13.081-970 Campinas-SP, Brasil}
\date{\today}
\maketitle

{\bf Abstract. }Linear systems on Lie groups are a natural generalization of linear system on Euclidian spaces. For such systems, this paper studies controllabi\-lity by taking in consideration the eigenvalues of an associated derivation $\DC$. When the state space is a solvable connected Lie group, controllability of the system is guaranteed if the reachable set of the neutral element is open and the derivation $\DC$ has only pure imaginary eigenvalues. For bounded systems on nilpotent Lie groups such conditions are also necessary.

\bigskip
{\bf Key words.} controllability, linear systems, Lie groups

\bigskip
{\bf 2010 Mathematics Subject Classification:} 16W25; 93B05; 93C15.

\section{Introduction}

Linear systems on Lie groups were introduced in \cite{VAJT} and \cite{VASM} as a natural gene\-ralization of the linear system on the Euclidean space

$$\dot{x}(t)=Ax(t)+Bu(t),\;\;\;A\in\R^{d\times d}, B\in\R^{d\times m}.$$

Controllability of such systems, with unrestricted control functions, was studied in \cite{VAJT}, \cite{VASM} and \cite{JPh} and null controllability in \cite{VAEK}.

For restricted control functions, the controllability of the above Euclidean system was studied in \cite{Sontag}. It was shown (Theorem 6 of \cite{Sontag}) that the system is controllable if, and only if, the matrix $A$ has only pure imaginary eigenvalues and if the pair $(A, B)$ is controllable, that is, if 
$$\mathrm{span}\{A^jb_i, i=1, \ldots, m, \;j=1, \ldots, d\}=\R^d,$$
where $b_i$, $i=1, \ldots, m$ are the columns of the matrix $B$. Moreover the controllability of the pair $(A, B)$ is equivalent to have that the reachable set from $0\in\R^d$ is open.

To a linear system on a connected Lie group $G$ we have associated a derivation $\DC$ of the Lie algebra $\fg$ of $G$. We show that if $G$ is a solvable connected Lie group a sufficient condition to obtain controllability of the linear system is that $\DC$ has only pure imaginary eigenvalues and that the reachable set from the neutral element is open in $G$, which generalizes the result in $\R^d$ since in this case the derivation $\DC$ coincides with the linear map induced by the matrix $A$. If the Lie group $G$ is nilpotent and the control functions are restricted we show that the the above conditions are also necessary.

The structure of the paper is the following: Section 2 gives the preliminaries that we need, such as the definitions of linear vector fields and linear systems. We introduce here several Lie subalgebras and Lie subgroups induced by the derivation $\DC$ associated with a linear vector field and prove some important results concerning them. In the Section 3 we show that there is a close relation between the reachable set from the neutral element and the subgroups introduced in Section 2. It is shown that if the reachable set from the neutral element is open, then it contains the subgroup associated with the generalized eigenspaces of $\DC$ whose eigenvalues have nonnegative real part. In Section 4 we show our main results on controllability using the results of section 3. We also prove that for linear systems on nilpotent Lie groups with bounded control functions, the conditions that we found are also necessary.

{\bf Notation:} The set of $\CC^{\infty}$ vector fields of a manifold $M$ will be denoted as $X(M)$. Let $\varrho$ be a metric in $M$  and let $x\in M$ and $\varepsilon>0$. The $\varepsilon$-ball centered at $x$ and its closure are given, respectively, by 
$$B_{\varepsilon}(x):=\{z\in M; \;\varrho(x, z)<\varepsilon\}\hspace{.3cm}\mbox{ and }\hspace{.3cm}\bar{B}_{\varepsilon}(x):=\{z\in M; \;\varrho(x, z)\leq\varepsilon\}.$$ 

A Lie group $G$ with Lie algebra $\fg$ will be always considered to be a $\CC^{\infty}$ manifold. For $X, Y\in\fg$ the {\bf Baker-Campbell-Hausdorff formula} ({\bf BCH}) is given by
$$\exp(X)\exp(Y)=\exp(S(X, Y))$$
where $S(X, Y)$ is a series depending on $X$, $Y$ and its brackets. The first terms are given by
\begin{equation}
\label{campbell}
S(X, Y)=X+Y+\frac{1}{2}[X, Y]+\frac{1}{12}[[X, Y], Y]-\frac{1}{12}[[X, Y], X]+\cdots
\end{equation}
where the next terms depend on the brackets of four or more elements. This series converges for $X$ and $Y$ small enough. In particular, when the group $G$ is nilpotent, the above series is finite for all $X, Y\in\fg$. 

\section{Preliminaries}

In this Section we will introduce the concepts of linear vector field and linear systems on Lie groups. We will also state several results about the subgroups induced by the derivation $\DC$ associated to a linear vector field.

\subsection{Linear vector fields and Lie group decompositions}

Let $G$ be a connected Lie group with Lie algebra $\fg$. For a given derivation $\DC:\fg\rightarrow\fg$ we consider the generalized eigenspaces associated to $\mathcal{D}$ given by 
$$\mathfrak{g}_{\alpha}=\{X\in\mathfrak{g}; (\mathcal{D}-\alpha)^nX=0 \mbox{ for some }n\geq 1\}$$
where $\alpha$ is an eigenvalue of $\mathcal{D}$. We can decompose $\mathfrak{g}$ as
$$\fg=\fg^+\oplus\fg^-\oplus\fg^0$$
where 
$$\fg^+=\bigoplus_{\alpha; \,\mathrm{Re}(\alpha)> 0}\fg_{\alpha},\hspace{1cm}\fg^-=\bigoplus_{\alpha; \,\mathrm{Re}(\alpha)< 0}\fg_{\alpha}\hspace{1cm} \mbox{ and }\hspace{1cm}\fg^0=\bigoplus_{\alpha; \,\mathrm{Re}(\alpha)=0}\fg_{\alpha}.$$

The next proposition shows that the vector spaces $\fg^+$, $\fg^-$ and $\fg^0$ are Lie algebras and that $\fg^+$, $\fg^-$ are actually nilpotent. The proof can be found in \cite{SM1}, Proposition 3.1.

\begin{proposition}
\label{der1}
Let $\mathcal{D}:\mathfrak{g}\rightarrow\mathfrak{g}$ a derivation of the Lie algebra $\mathfrak{g}$ of finite dimension over a closed field. Consider the decomposition
$$\mathfrak{g}=\bigoplus_{\alpha}\mathfrak{g}_{\alpha}$$
where $\mathfrak{g}_{\alpha}$ is the generalized eigenspace associated to the eigenvalue $\alpha$. Then
\begin{equation}
\label{zerotrace}
[\mathfrak{g}_{\alpha}, \mathfrak{g}_{\beta}]\subset\mathfrak{g}_{\alpha+\beta},
\end{equation}
with $\mathfrak{g}_{\alpha+\beta}=0$ in case ${\alpha+\beta}$ is not an eigenvalue of $\mathcal{D}$.
\end{proposition}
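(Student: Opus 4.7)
The plan is to prove the bracket relation directly from the Leibniz rule for the derivation $\mathcal{D}$ by establishing a binomial-type identity for iterated applications of $\mathcal{D} - (\alpha+\beta)$ to brackets. Fix $X \in \mathfrak{g}_\alpha$ and $Y \in \mathfrak{g}_\beta$; the goal is to find $n$ with $(\mathcal{D} - (\alpha+\beta))^n[X,Y] = 0$, which puts $[X,Y]$ into the generalized eigenspace $\mathfrak{g}_{\alpha+\beta}$ (understood as $0$ if $\alpha+\beta$ is not itself an eigenvalue).

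First I would record the one-step identity. Using $\mathcal{D}[X,Y] = [\mathcal{D}X, Y] + [X, \mathcal{D}Y]$ and subtracting $(\alpha+\beta)[X,Y]$ from both sides gives
\begin{equation*}
(\mathcal{D} - (\alpha+\beta))[X,Y] = [(\mathcal{D}-\alpha)X, Y] + [X, (\mathcal{D}-\beta)Y].
\end{equation*}
This is the analogue of the product rule $(\partial - (a+b))(fg) = ((\partial - a)f)g + f(\partial - b)g$, which by the usual computation iterates to a binomial formula.

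Next, I would prove by induction on $n \geq 1$ that
\begin{equation*}
(\mathcal{D} - (\alpha+\beta))^n [X,Y] = \sum_{k=0}^{n} \binom{n}{k} \bigl[(\mathcal{D}-\alpha)^k X,\, (\mathcal{D}-\beta)^{n-k} Y\bigr].
\end{equation*}
The base case $n=1$ is the identity above; the inductive step uses that identity applied inside each summand together with Pascal's rule. This is the step where one must be careful with bookkeeping, but it is entirely formal and mirrors the Leibniz formula for the ordinary derivative of a product.

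Finally, choose $N_1, N_2 \geq 1$ with $(\mathcal{D}-\alpha)^{N_1} X = 0$ and $(\mathcal{D}-\beta)^{N_2} Y = 0$, which exist by the definition of the generalized eigenspaces. Taking $n = N_1 + N_2$, in every term of the binomial sum either $k \geq N_1$, killing the first argument of the bracket, or $n-k \geq N_2$, killing the second. Hence $(\mathcal{D}-(\alpha+\beta))^{N_1+N_2}[X,Y] = 0$, so $[X,Y]$ belongs to the generalized eigenspace of $\mathcal{D}$ associated with $\alpha+\beta$, which is either $\mathfrak{g}_{\alpha+\beta}$ or $\{0\}$ depending on whether $\alpha+\beta$ is an eigenvalue. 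The only real obstacle is the clean verification of the binomial identity; everything else is an immediate consequence.
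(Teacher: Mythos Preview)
Your proof is correct and is the standard argument for this classical fact: the Leibniz identity $(\mathcal{D}-(\alpha+\beta))[X,Y]=[(\mathcal{D}-\alpha)X,Y]+[X,(\mathcal{D}-\beta)Y]$ iterates to the binomial expansion, and choosing $n=N_1+N_2$ forces every summand to vanish. The paper itself does not supply a proof but simply cites \cite{SM1}, Proposition~3.1; the argument you wrote is precisely the one found in that (and virtually every other) reference, so there is nothing substantively different to compare.
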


\begin{remark}
\label{remarkrealcase}
If $\fg$ is a real Lie algebra, we can consider its complexification $\fg_{\C}$. Since the elements of $\fg_{\C}$ are of the form $X=\sum a_iX_i$ with $a_i\in\C$, $X_i\in\fg$, we can extend $\DC$ by linearity to a derivation $\DC_{\C}$ in $\fg_{\C}$. It is not hard to show using the properties of $\C$ that $\DC$ and $\DC_{\C}$ have the same eigenvalues and that
\begin{equation}
\label{eigenspace}
(\fg_{\alpha})_{\C}=(\fg_{\C})_{\alpha}
\end{equation}
where $\fg_{\alpha}$ and $(\fg_{\C})_{\alpha}$ are the generalized eigenspace associated respectively with $\DC$ and $\DC_{\C}$. That  implies that the above proposition is also valid when $\fg$ is a real Lie algebra.
\end{remark}

We will denote by $G^+$, $G^-$, $G^0$, $G^{+, 0}$ and $G^{-, 0}$ the connected Lie subgroups of $G$ with Lie algebras $\fg^+$, $\fg^-$, $\fg^0$, $\fg^{+, 0}:=\fg^+\oplus\fg^0$ and $\fg^{-, 0}:=\fg^-\oplus\fg^0$, respectively. 

\begin{lemma}
\label{projection}
Let $\pi_*:\fg\rightarrow\fh$ be a surjective homomorphism of Lie algebras. If $\DC_1$ and $\DC_2$ are derivations of $\fg$ and $\fh$, respectively, such that $\pi_*\circ\DC_1=\DC_2\circ\pi_*$ then
$$\pi_*(\fg^+)=\fh^+, \;\;\;\pi_*(\fg^-)=\fh^-\;\;\mbox{ and }\;\;\pi_*(\fg^0)=\fh^0.$$
Moreover, if $G$ and $H$ are connected Lie groups associated respectively with $\fg$ and $\fh$ and $\pi:G\rightarrow H$ is a surjective homomorphism such that $(d\pi)_e\circ\DC_1=\DC_2\circ(d\pi)_e$
then 
$$\pi(G^+)=H^+, \;\;\;\pi(G^-)=H^-\;\;\mbox{ and }\;\;\pi(G^0)=H^0.$$
\end{lemma}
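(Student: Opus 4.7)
The plan is to first establish the three inclusions at the level of individual generalized eigenspaces, then deduce the reverse inclusions from surjectivity and directness, and finally lift everything to the group level.

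First, I would fix an eigenvalue $\alpha$ of $\DC_1$ and $X\in\fg_{\alpha}$ with $(\DC_1-\alpha)^n X=0$. Iterating the intertwining relation $\pi_*\circ\DC_1=\DC_2\circ\pi_*$ gives $(\DC_2-\alpha)^n\pi_*(X)=0$, so either $\alpha$ is also an eigenvalue of $\DC_2$ and $\pi_*(X)\in\fh_{\alpha}$, or $(\DC_2-\alpha)^n$ is invertible and $\pi_*(X)=0$. Either way $\pi_*(\fg_{\alpha})\subset\fh_{\alpha}$, adopting the convention that $\fh_{\alpha}=0$ when $\alpha$ is not an eigenvalue of $\DC_2$. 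Working in the complexifications $\fg_{\C}$ and $\fh_{\C}$ as in Remark~\ref{remarkrealcase} makes the invertibility argument rigorous when $\alpha\notin\R$. Summing over eigenvalues sorted by the sign of $\mathrm{Re}(\alpha)$ yields $\pi_*(\fg^+)\subset\fh^+$, $\pi_*(\fg^-)\subset\fh^-$, and $\pi_*(\fg^0)\subset\fh^0$.

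For the reverse inclusions I would combine surjectivity of $\pi_*$ with uniqueness of the decomposition in $\fh$. Given $Y\in\fh^+$, pick $X\in\fg$ with $\pi_*(X)=Y$ and split $X=X^++X^-+X^0$; then $Y=\pi_*(X^+)+\pi_*(X^-)+\pi_*(X^0)$ is a decomposition of $Y$ along $\fh^+\oplus\fh^-\oplus\fh^0$, so by uniqueness $\pi_*(X^-)=\pi_*(X^0)=0$ and $Y=\pi_*(X^+)\in\pi_*(\fg^+)$. The same argument handles the pieces $\fh^-$ and $\fh^0$.

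For the group statement I would invoke the standard fact that a connected Lie subgroup is generated by the exponentials of its Lie algebra, together with the naturality relation $\pi\circ\exp=\exp\circ(d\pi)_e$. Applied to $G^+$ this yields
$$\pi(G^+)=\langle\pi(\exp(\fg^+))\rangle=\langle\exp(\pi_*(\fg^+))\rangle=\langle\exp(\fh^+)\rangle=H^+,$$
and the cases of $G^-$ and $G^0$ are identical. The only point requiring care is the eigenspace containment $\pi_*(\fg_{\alpha})\subset\fh_{\alpha}$; once that is secured, the rest is essentially formal manipulation with direct sums and with the naturality of $\exp$.
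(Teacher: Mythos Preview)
Your proof is correct and follows essentially the same line as the paper's: establish $\pi_*(\fg_{\alpha})\subset\fh_{\alpha}$ via the intertwining relation, use surjectivity together with the direct-sum decomposition of $\fh$ to upgrade the inclusions to equalities, and then pass to the groups through $\pi\circ\exp=\exp\circ(d\pi)_e$ and connectedness. The only cosmetic difference is that the paper deduces the equalities in one line from $\fh=\pi_*(\fg^+)+\pi_*(\fg^-)+\pi_*(\fg^0)$, whereas you unpack this element by element; the content is identical.
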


\begin{proof}
Let $\alpha$ to be an eigenvalue of $\DC_1$. For any $X\in\fg_{\alpha}$ there exists $n\in\N$ such that $(\DC_1-\alpha)^nX=0$. Since $\pi_*\circ\DC_1=\DC_2\circ\pi_*$ we have that 
$$(\DC_2-\alpha)^n\pi_*(X)=\pi_*((\DC_1-\alpha)^nX)=0$$
which shows us that when $\pi_*(\fg_{\alpha})\neq \{0\}$ we have that $\alpha$ is necessarily an eigenvalue of $\DC_2$ and that $\pi_*(\fg_{\alpha})\subset\fh_{\alpha}$. Therefore $$\pi_*(\fg^+)\subset\fh^+, \;\;\;\pi_*(\fg^-)\subset\fh^-\;\;\mbox{ and }\;\;\pi_*(\fg^0)\subset\fh^0.$$
Also, since $\pi_*$ is surjective, we have that $\fh=\pi_*(\fg)=\pi_*(\fg^+)+\pi_*(\fg^-)+\pi_*(\fg^0)$ and consequently the above inclusions must be equalities proving the first assertion. The second one follows from the formula 
$$\pi(\exp_G X)=\exp_H(d\pi)_eX$$ 
and the fact that the Lie groups are connected.
\end{proof}

\bigskip

The {\bf normalizer} of $\fg$ is by definition the space
$$\eta:=\mathrm{norm}_{X(G)}(\fg):=\{F\in X(G); \,\mbox{ for all }Y\in\fg, \;\;[F, Y]\in\fg\}.$$

\begin{definition}
A vector field $\XC$ on $G$ is said to be {\bf linear} if it belongs to $\eta$ and if $\XC(e)=0$, where $e\in G$ stands for the neutral element of $G$.
\end{definition} 

The following result (Theorem 1 of \cite{JPh}) gives equivalent conditions for a vector field on $G$ to be linear.

\begin{theorem}
\label{propertieslinear}
Let $\XC$ be a vector field on a connected Lie group $G$. The fo\-llowing conditions are equivalent:
\begin{itemize}
\item[1.] $\XC$ is linear;
\item[2.] The flow of $\XC$ is a one parameter group of automorphisms of $G$, that is 
$$\varphi_t(gh)=\varphi_t(g)\varphi_t(h),$$
for any $t\in\R$, $g, h\in G$;
\item[3.] $\XC$ satisfies
\begin{equation}
\XC(gh)=(dL_g)_h\XC(h)+(dR_h)_g\XC(g), \;\;\mbox{ for all }\;\;g, h\in G.
\end{equation}
\end{itemize}
\end{theorem}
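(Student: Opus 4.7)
The plan is to establish the cycle $(2) \Rightarrow (3) \Rightarrow (1) \Rightarrow (2)$. For $(2) \Rightarrow (3)$, differentiating $\varphi_t(gh) = \varphi_t(g)\varphi_t(h)$ at $t = 0$ and applying the chain rule to the multiplication map $G \times G \to G$ gives (3) directly:
\[
\XC(gh) = \frac{d}{dt}\bigg|_{t=0}\bigl[\varphi_t(g)\varphi_t(h)\bigr] = (dR_h)_g\XC(g) + (dL_g)_h\XC(h).
\]
For $(3) \Rightarrow (1)$, setting $g = h = e$ in (3) and using $(dL_e)_e = (dR_e)_e = \id$ yields $\XC(e) = 0$. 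The normalizer condition follows after promoting (3) back to (2): both $t \mapsto \varphi_t(gh)$ and $t \mapsto \varphi_t(g)\varphi_t(h)$ solve $\dot x = \XC(x)$ with initial value $gh$ (by (3) evaluated at $(\varphi_t(g),\varphi_t(h))$), so ODE uniqueness forces them to coincide; then for any left-invariant $Y$, the pushforward $(\varphi_t)_*Y$ is again left-invariant, so $[\XC, Y]$ is a pointwise limit of left-invariant fields and therefore lies in $\fg$.

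For the substantive direction $(1) \Rightarrow (2)$, the normalizer condition with $\XC(e) = 0$ makes $D := \ad_{\XC}|_{\fg} : \fg \to \fg$ a Lie algebra derivation (Leibniz from Jacobi). I would work on the simply connected cover $\widetilde G$, where the family $e^{tD}$ of Lie algebra automorphisms integrates to a one-parameter group $\psi_t$ of Lie group automorphisms of $\widetilde G$, whose generator $\hat\XC \in \eta$ vanishes at $\tilde e$ and induces $D$. Writing $\tilde\XC$ for the lift of $\XC$, the difference $W := \tilde\XC - \hat\XC$ lies in $\eta$, vanishes at $\tilde e$, and satisfies $[W, Y] = 0$ for every left-invariant $Y$. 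A vector field commuting with every left-invariant field has a flow commuting with every right translation, so that flow consists of left translations and $W$ is right-invariant; combined with $W(\tilde e) = 0$ this forces $W \equiv 0$. Hence $\tilde\XC$ generates the automorphism flow $\psi_t$, and the automorphism property descends along the covering projection to give (2) on $G$.

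The main obstacle is this last implication, with two interlocking difficulties. First, a Lie algebra derivation exponentiates to a group automorphism only on a simply connected group, which forces the detour through $\widetilde G$ and the verification that the resulting automorphism flow descends along the covering projection. Second, there is a rigidity statement --- any $\XC \in \eta$ vanishing at $e$ is globally determined by its linearization $D$ at $e$ --- that distinguishes ``linear'' in the Lie-group sense from merely ``linear at $e$''. The argument above resolves the rigidity via the ``commutes with all left-invariants implies right-invariant'' dichotomy, which is the crucial structural input and the step most likely to require the most care to state and justify cleanly.
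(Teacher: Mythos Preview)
The paper does not contain a proof of this theorem: it is stated without argument and attributed to Theorem~1 of \cite{JPh}, so there is nothing in the present paper to compare your proposal against.

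On its own merits your outline is the standard one and is essentially correct. Two minor points deserve tightening. First, in this paper $\fg$ is identified with the \emph{right}-invariant vector fields (see the computation immediately following the theorem and the definition of the linear system), so the invariant/commuting dichotomy in your argument should be stated with left and right interchanged: automorphisms push right-invariant fields to right-invariant fields, and a field commuting with all right-invariant fields is left-invariant. This is only a convention swap and changes nothing substantive. Second, statement~(2) includes completeness of the flow (the paper notes this explicitly just after the theorem), which your $(3)\Rightarrow(2)$ step does not address; it follows because the identity $\varphi_t(gh)=\varphi_t(g)\varphi_t(h)$ together with $\XC(e)=0$ makes the domain of $\varphi_t$ at each fixed $t$ an open subgroup of the connected group $G$, hence all of $G$. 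The descent from $\widetilde G$ to $G$ in $(1)\Rightarrow(2)$ is unproblematic: since $\tilde\XC$ is the lift of $\XC$ and $\XC(e)=0$, the flow $\psi_t$ preserves the fibre over $e$, i.e.\ the kernel of the covering map, and therefore descends to automorphisms of $G$.
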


\bigskip
Let $(\varphi_t)_{t\in\mathbb{R}}$ denote the one parameter group of automorphisms of $G$ generated by the linear vector field $\mathcal{X}$. The second item on the above Theorem implies that $\XC$ is complete. Moreover, for any vector field $Y$ we have that
\begin{equation}
\label{colchete}
[\mathcal{X}, Y](e)=\frac{d}{dt}_{|t=0}(d\varphi_{-t})_{\varphi_t(e)}Y(\varphi_t(e))=\frac{d}{dt}_{|t=0}(d\varphi_{-t})_{e}Y(e)
\end{equation}
since $\varphi_t(e)=e$ for all $t\in\mathbb{R}$. Furthermore, if $Y$ is a right invariant vector field we have at any point $g\in G$ that
$$[\mathcal{X}, Y](g)=\frac{d}{dt}_{|t=0}(d\varphi_{-t})_{\varphi_t(g)}Y(\varphi_t(g))=\frac{d}{dt}_{|t=0}(d\varphi_{-t})_{\varphi_t(g)}(dR_{\varphi_t(g)})_eY(e)=$$
$$=\frac{d}{dt}_{|t=0}(dR_g)_e(d\varphi_{-t})_eY(e)=(dR_g)_e[\mathcal{X}, Y](e),$$
where in the fourth equality we used that $\varphi_{-t}\circ R_{\varphi_t(g)}=R_g\circ\varphi_{-t}$ (which follows direct from the property 2. above).

Then for a given linear vector field $\mathcal{X}$, one can associate the derivation $\mathcal{D}$ of $\mathfrak{g}$ defined as
$$\mathcal{D}Y=-[\mathcal{X}, Y](e), \mbox{ for all }Y\in\mathfrak{g}.$$
The minus sign in the above formula comes from the formula $[Ax, b]=-Ab$ in $\mathbb{R}^d$. It is also used in order to avoid a minus sign in the equality 
$$\varphi_t(\exp Y)=\exp(\mathrm{e}^{t\mathcal{D}}Y), \mbox{ for all }t\in\mathbb{R}, Y\in\mathfrak{g}$$ stated in the next proposition (Proposition 2 of \cite{JPh}).

\begin{proposition}
\label{derivativeonorigin}
For all $t\in\mathbb{R}$
$$(d\varphi_t)_e=\mathrm{e}^{t\mathcal{D}}$$
and since $\varphi_t$ is an automorphism of $G$ we have that
$$\varphi_t(\exp Y)=\exp(d\varphi_t)_eY=\exp(\mathrm{e}^{t\mathcal{D}}Y), \mbox{ for all }t\in\mathbb{R}, Y\in\mathfrak{g}.$$
\end{proposition}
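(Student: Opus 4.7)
The plan is to establish that the family $A(t) := (d\varphi_t)_e$ is a one-parameter subgroup of $\mathrm{GL}(\fg)$ whose derivative at $t=0$ equals $\DC$; uniqueness of the exponential then forces $A(t) = e^{t\DC}$. The second formula will follow from the naturality of $\exp$ under Lie group automorphisms.

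First I would compute $A'(0)$. By definition $\DC Y = -[\XC, Y](e)$ for $Y \in \fg$, and equation (\ref{colchete}) gives
\[
[\XC, Y](e) = \frac{d}{dt}\bigg|_{t=0} (d\varphi_{-t})_e Y(e).
\]
Identifying $Y \in \fg$ with $Y(e) \in T_e G$ and changing the sign of the parameter, this yields
\[
\DC Y = \frac{d}{dt}\bigg|_{t=0} (d\varphi_{t})_e Y,
\]
so $A'(0) = \DC$ as a linear endomorphism of $\fg \cong T_e G$.

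Next I would exploit Theorem \ref{propertieslinear}(2): since $\varphi_t$ is an automorphism of $G$, we have $\varphi_t(e) = e$ for every $t$, and $\varphi_{t+s} = \varphi_t \circ \varphi_s$. Differentiating this composition at $e$ via the chain rule, and using $\varphi_s(e) = e$, gives
\[
A(t+s) = (d\varphi_t)_{\varphi_s(e)} \circ (d\varphi_s)_e = A(t)\, A(s),
\]
with $A(0) = \id$. Hence $A : \R \to \mathrm{GL}(\fg)$ is a smooth one-parameter subgroup, and combined with $A'(0) = \DC$ the standard uniqueness theorem for the matrix exponential yields $A(t) = e^{t\DC}$, proving the first assertion.

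For the second assertion, again Theorem \ref{propertieslinear}(2) tells us $\varphi_t \in \mathrm{Aut}(G)$. The naturality property $\phi \circ \exp_G = \exp_G \circ (d\phi)_e$ for any Lie group automorphism $\phi$ applied to $\phi = \varphi_t$ then gives
\[
\varphi_t(\exp Y) = \exp((d\varphi_t)_e Y) = \exp(e^{t\DC} Y)
\]
for all $Y \in \fg$ and $t \in \R$. There is no substantial obstacle here: the only point requiring care is the sign convention in the definition $\DC Y = -[\XC, Y](e)$, which is exactly what makes $A'(0) = \DC$ rather than $-\DC$ and thereby removes a sign from the exponential formula.
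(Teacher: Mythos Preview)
Your argument is correct and is essentially the standard proof. Note, however, that the paper does not supply its own proof of this proposition: it simply cites Proposition~2 of \cite{JPh}. Your self-contained derivation---identifying $t\mapsto (d\varphi_t)_e$ as a one-parameter subgroup of $\mathrm{GL}(\fg)$, computing its infinitesimal generator via equation~(\ref{colchete}) and the sign convention in the definition of $\DC$, and then invoking naturality of $\exp$ under automorphisms---is exactly the expected argument and fills in what the paper leaves to the reference.
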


\begin{remark}
Note that for the linear Euclidean case the formula $[Ax, b]=-Ab$ implies that the derivation $\mathcal{D}$ coincides with the linear map induced by $A$.
\end{remark}

\begin{remark}
\label{exponentiallystable}
Since $\DC|_{\fg^-}$ has only eigenvalues with negative real part we have that $0\in\fg^-$ is exponentially stable in positive time for the flow $\rme^{t\DC|_{\fg^-}}=\rme^{t\DC}|_{\fg^-}$, that is, there exists $\lambda, c>0$ such that $|\rme^{t\DC}X|\leq c^{-1}\rme^{-\lambda t}|X|$ for $t\geq 0$ and $X\in\fg^-$. The same is true for $\fg^+$ in negative time. 
\end{remark}
Let $\DC$ be the derivation associated with a linear vector field $\XC$ and $\varphi_t$ the flow of $\XC$. We say that a subspace $\fh\subset\fg$ is {\bf $\DC$-invariant} if $\DC(\fh)\subset\fh$. In the same way, we say that a subgroup $H$ of $G$ is {\bf $\varphi$-invariant} if $\varphi_t(H)=H$ for any $t\in \R$, where $\varphi_t$ is the flow associated with $\XC$. It is straightforward to see that if $H$ is a connected Lie subgroup with Lie algebra $\fh$ then $H$ is $\varphi$-invariant if, and only if, $\fh$ is $\DC$-invariant.

\begin{proposition}
\label{subgroups}
Let $\DC$ be the derivation associated with a linear vector field $\XC$. For the subgroups induced by  $\DC$ we have:
\begin{itemize}
\item[1.] $G^{+, 0}=G^+G^0=G^0G^+$ \;\;\;\;and\;\;\;\; $G^{-, 0}=G^-G^0=G^0G^-$;
\item[2.] $G^+\cap G^-=G^{+, 0}\cap G^-=G^{-, 0}\cap G^+=\{e\}$;
\item[3.] $G^{+, 0}\cap G^{-, 0}=G^0$;
\item[4.] All the above subgroups are closed in $G$;
\item[5.] If $G$ is solvable then 
$$G=G^{+, 0}G^-=G^{-, 0}G^+.$$ 
Moreover the fixed points of $\varphi$ are in $G^0$.
\end{itemize}
\end{proposition}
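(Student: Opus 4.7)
My plan is to tackle the five items in order, with items~4 and~5 being the deepest. For item~1, I would use that $\fg^+$ is an ideal of $\fg^{+,0}$ (Proposition~\ref{der1} gives $[\fg^+,\fg^0]\subset\fg^+$), so $G^+$ is normal in $G^{+,0}$; consequently $G^+G^0=G^0G^+$ is a subgroup of $G^{+,0}$, and since its tangent space at $e$ is $\fg^++\fg^0=\fg^{+,0}$ and $G^{+,0}$ is connected, it must equal $G^{+,0}$. The statement for $G^{-,0}$ is symmetric.

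For item~2, the local ingredient is the exponential chart: the map $(X^+,X^-,X^0)\mapsto \exp(X^+)\exp(X^-)\exp(X^0)$ from a neighborhood of $0$ in $\fg^+\oplus\fg^-\oplus\fg^0$ to a neighborhood $U$ of $e$ is a diffeomorphism, so uniqueness of this factorization gives $G^+\cap G^-\cap U=\{e\}$, $G^{+,0}\cap G^-\cap U=\{e\}$, and $G^{-,0}\cap G^+\cap U=\{e\}$. To globalize I would use Remark~\ref{exponentiallystable}: since $\fg^-$ is nilpotent, every $g\in G^-$ is a finite product $\exp(X_1)\cdots\exp(X_k)$ with $X_i\in\fg^-$, and Proposition~\ref{derivativeonorigin} gives $\varphi_t(g)=\exp(\rme^{t\DC}X_1)\cdots\exp(\rme^{t\DC}X_k)\to e$ as $t\to+\infty$. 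Hence for $g$ in $G^+\cap G^-$ or $G^{+,0}\cap G^-$ the orbit stays in the intersection, eventually enters $U$, and must hit $e$; the argument is symmetric in negative time for $G^{-,0}\cap G^+$. Item~3 then follows immediately: writing $g\in G^{+,0}\cap G^{-,0}$ as $g=g^+g^0$ by item~1 gives $g^+=g(g^0)^{-1}\in G^{-,0}$, so $g^+\in G^+\cap G^{-,0}=\{e\}$ and $g=g^0\in G^0$.

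Item~4 is where the real work lies. My plan is to give each subgroup a dynamical description that is automatically closed, most naturally
\[
G^+=\{g\in G : \varphi_t(g)\to e\text{ as }t\to-\infty\}
\]
and symmetrically for $G^-$, with $G^0$ characterized via items~1--3 together with boundedness of the two-sided orbit near $e$. The inclusion $\subset$ is the product-of-exponentials argument from item~2. The reverse inclusion is delicate: for $t$ very negative $\varphi_t(g)\in U$ factors uniquely as $\exp(X^+(t))\exp(X^-(t))\exp(X^0(t))$ with all three pieces tending to $0$; applying $\varphi_{-t}$ and using that $\rme^{-t\DC}$ expands $\fg^+$ while contracting $\fg^-$ and keeping $\fg^0$ bounded, one argues that $X^-(t)$ and $X^0(t)$ must vanish, leaving $g\in\exp(\fg^+)\subset G^+$. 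Closedness of $G^{+,0}$ and $G^{-,0}$ would then follow from item~1, normality of $G^+$ in $G^{+,0}$ (resp.\ $G^-$ in $G^{-,0}$), and closedness of the pieces.

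For item~5, the Lie-algebra decomposition $\fg=\fg^{+,0}\oplus\fg^-$ together with solvability should lift to a global product decomposition. I would pass to the simply connected cover $\tilde G$, where every Lie-subalgebra direct-sum decomposition of a solvable Lie algebra integrates to a product decomposition of $\tilde G$, and then descend via the covering homomorphism using Lemma~\ref{projection}. For the fixed points of $\varphi$, decompose $g=g^{+,0}g^-$ and apply $\varphi_t$ with $t\to+\infty$: the $G^-$ factor contracts to $e$, so $\varphi_t(g^{+,0})\to g$ and hence $g\in\overline{G^{+,0}}=G^{+,0}$ by item~4, which combined with the trivial intersection of item~2 forces $g^-=e$; dually, using $G=G^{-,0}G^+$ and $t\to-\infty$ gives $g\in G^{-,0}$, and item~3 concludes $g\in G^0$. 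The hard part will clearly be item~4: turning the eigenspace-wise exponential attraction of Remark~\ref{exponentiallystable} into global closedness of the associated connected subgroups requires careful coordinate bookkeeping in the local exponential chart, and everything else in the proposition ultimately rests on it.
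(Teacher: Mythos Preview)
Your treatment of items~1--3 and of the fixed-point assertion in item~5 is essentially the same as the paper's. The substantive differences, and the real problems, are in items~4 and~5.

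\textbf{Item 4.} Your plan is to prove closedness of $G^+$ by identifying it with the set $S=\{g\in G:\varphi_t(g)\to e\text{ as }t\to-\infty\}$ and asserting that this description is ``automatically closed.'' It is not. A set defined by pointwise convergence of orbits to $e$ has no reason to be closed: if $g_n\to g$ with $g_n\in S$, nothing forces $\varphi_t(g)\to e$, because the rate of attraction for $g_n$ is not uniform in $n$. Your sketch of the reverse inclusion $S\subset G^+$ is also incomplete: when you factor $\varphi_t(g)=\exp(X^+(t))\exp(X^-(t))\exp(X^0(t))$ and then apply $\varphi_{-t}$, the component $\rme^{-t\DC}X^+(t)$ may leave the chart domain, so you cannot invoke uniqueness of the local factorization to conclude that $X^-(t)$ and $X^0(t)$ vanish. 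The paper avoids all of this. It uses the local product chart $f(X,Y)=\exp(X)\exp(Y)$ on $\fg^{+,0}\times\fg^-$ together with item~2 (which you have already proved) to show that $gW\cap G^{+,0}=g\exp(V)$ for every $g\in G^{+,0}$; this exhibits $G^{+,0}$ as an embedded submanifold, hence closed. Closedness of the other subgroups is obtained analogously. This argument is short and uses exactly the trivial-intersection statements you already have.

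\textbf{Item 5.} You propose to lift to the simply connected cover $\tilde G$, invoke that ``every Lie-subalgebra direct-sum decomposition of a solvable Lie algebra integrates to a product decomposition of $\tilde G$,'' and descend via Lemma~\ref{projection}. The descent is fine, but the quoted statement is not a standard fact and is not true without further hypotheses (typically one needs one summand to be an ideal, and neither $\fg^{+,0}$ nor $\fg^-$ is an ideal of $\fg$ in general). The paper instead argues by induction on $\dim G$: take the last nonzero term $\fg^{(k)}$ of the derived series, let $A$ be the closure of its integral subgroup (abelian, normal, $\varphi$-invariant), apply the induction hypothesis to $G/A$ via Lemma~\ref{projection}, and then use that the abelian group $A$ itself satisfies $A=A^{+,0}A^-$ to conclude $G=G^{+,0}AG^-=G^{+,0}G^-$. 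This is elementary and self-contained, and does not rely on any global structure theorem for simply connected solvable groups.
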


\begin{proof}
Since the above items have analogous statements we will prove just one assertion of each.

\begin{itemize}
\item[1.] By Proposition \ref{der1} and Remark \ref{remarkrealcase} we have that $[\fg^{+}, \fg^0]\subset\fg^+$ and consequently that $\fg^+$ is an ideal in $\fg^{+, 0}$. We have then that $G^+$ is a normal subgroup of $G^{+, 0}$ which already proves the second equality and also that $G^+G^0$ is a connected subgroup of $G$ with Lie algebra $\fg^{+, 0}$ which implies $G^{+, 0}=G^+G^0$ by unicity.

\item[2.] Note that the intersection $G^{+, 0}\cap G^-$ is a Lie group with Lie algebra $\fg^{+, 0}\cap\fg^-=\{0\}$ which implies that it is a discrete Lie subgroup of $G$. Let then $g\in G^{+, 0}\cap G^-$. Since $g\in G^-$ there exists $Z\in\fg^-$ such that $g=\exp(Z)$. By Remark \ref{exponentiallystable} and Proposition \ref{derivativeonorigin} we have that $\varphi_t(g)\rightarrow e$ as $t\rightarrow+\infty$. Since $\varphi_t(G^{+, 0}\cap G^-)=G^{+, 0}\cap G^-$ and $G^{+, 0}\cap G^-$ is discrete, there exists an open set $A\subset G$ such that $A\cap \left(G^{+, 0}\cap G^-\right)=\{e\}$ which implies that, for $t>0$ large enough, $\varphi_t(g)\in A\cap \left(G^{+, 0}\cap G^-\right)$ and consequently that $\varphi_t(g)=e$. Since $\varphi_t(e)=e$ for all $t\in\R$ we have $g=\varphi_{-t}(e)=e$ showing the result.

\item[3.] By item 1. above we already have that $G^0\subset G^{+, 0}\cap G^{-, 0}$. Let then $x\in G^{+, 0}\cap G^{-, 0}$. Since $x\in G^{+, 0}$ we have that $x=gg_0$ with $g\in G^+$ and $g_0\in G^0$ which implies that $G^+\ni g=xg_0^{-1}\in G^{-, 0}G^0=G^{-, 0}$ and by item 2. above we must have $g=e$. Then $x=g_0$ and consequently $ G^{+, 0}\cap G^{-, 0}=G^0$.

\item[4.] We will show that $G^{+, 0}$ is closed. Since $\fg=\fg^{+, 0}\oplus\fg^-$ there are open neighborhoods $0\in V\subset \fg^{+, 0}$, $0\in U\subset\fg^-$ and $e\in W\subset G$ such that the map $f:V\times U\rightarrow W$ defined by $f(X, Y)=\exp(X)\exp(Y)$ is a diffeomorphism and $W=\exp(V)\exp(U)$. Since $G^{+, 0}\cap G^-=\{e\}$ we have that $gW\cap G^{+, 0}=g\exp(V)$ for any $g\in G^{+, 0}$ which implies that, for any point $g\in G^{+, 0}$ there exists a neighborhood $W$ of $g$ in $G$, neighborhoods $V$ and $U$ of $0\in\fg^{+, 0}$ and of $0\in\fg^-$, respectively, and a diffeomorphism $f_g:=L_g\circ f:V\times U\rightarrow gW$ such that $f_g(V\times \{0\})=gW\cap G^{+, 0}$. Proposition B.1 of \cite{SM2} implies then that $G^{+, 0}$ is an embedded manifold of $G$ and consequently that it is a closed subgroup of $G$.

\item[5.] We will prove by induction on the dimension of $G$ that $G=G^{+, 0}G^-$ since the second one follows by applying the inversion and using item 1. above. If $\dim G=1$ then $G$ is abelian and the result is certainly true. Let us assume then that the result holds true for any solvable connected Lie group with dimension smaller than $n$ and let $G$ to be a solvable connected Lie group with dimension $n$. Since $G$ is solvable its Lie algebra $\fg$ is also solvable and we have the sequence of ideals of $\fg$ given by its derivative series
$$\fg=\fg^{(0)}\supset \fg^{(1)}\supset\ldots\supset\fg^{(k)}\supset\fg^{(k+1)}=\{0\}$$
where $\fg^{(i)}=[\fg^{(i-1)}, \fg^{(i-1)}]$ for $i=1, \ldots k$. Since $\DC$ is a derivation, each $\fg^{(i)}$ is $\DC$-invariant. Let $G^{(k)}$ to be the normal connected subgroup of $G$ with Lie algebra $\fg^{(k)}$. We have that $G^{(k)}$ is $\varphi$-invariant and since $\fg^{(k+1)}=\{0\}$ it is also an abelian Lie group. If we denote by $A$ the closure of $G^{(k)}$ we have that $A$ is a closed connected $\varphi$-invariant Lie subgroup of $G$ that is abelian and normal. Moreover, $\dim A\geq\dim G^{(k)}>0$ which implies that $H:=G/A$ is a solvable connected Lie group with $\dim H<\dim G$. Since $A$ is $\varphi$-invariance we have a well induced linear vector field on $H$ that is $\pi$-conjugated with $\XC$ in $G$ which implies that their associated derivations are $(d\pi)_e$ conjugated. By Lemma \ref{projection} and the induction hypothesis we get that $H=\pi(G^{+, 0}G^-)$ and consequently that 
$$G=G^{+, 0}G^-A=G^{+, 0}AG^-.$$
Since the Lie algebra $\fa$ of $A$ is abelian and $\DC$-invariant we also have that $A=A^{+, 0}A^-$ where the Lie algebras of $A^{+, 0}$ and $A^-$ are given respectively by 
$$\fa\cap\fg^{+, 0} \;\;\mbox{ and }\;\;\fa\cap\fg^-.$$
Therefore $A^{+, 0}\subset G^{+, 0}$ and  $A^0\subset G^0$ and consequently  
$$G=G^{+, 0}AG^-=G^{+, 0}A^{+, 0}A^-G^-\subset G^{+, 0}G^-\subset G$$
which proves the first part.

Let $g\in G$ to be a fixed point of $\varphi$ and $k_1\in G^+$, $k_2\in G^0$, $k_3\in G^-$ such that $g=k_1k_2k_3$. By item 2. and $\varphi$-invariance we have that $\varphi_t(g)=g$ if, and only if, $\varphi_t(k_i)=k_i$, $i=1, 2, 3$. Since $\varphi_t(k_1)\rightarrow e$ for $t\rightarrow-\infty$ we have that $\varphi_t(k_1)=k_1$ for any $t\in\R$ if, and only if, $k_1=e$. Analogously we have that $k_3=e$ which implies that $g=k_2\in G^0$ concluding the proof.
\end{itemize}
\end{proof}

The next proposition shows that if $H\subset G$ is a $\varphi$-invariant compact Lie subgroup, then $H\subset G^0$.

\begin{proposition}
\label{compact}
Let $H\subset G$ be a $\varphi$-invariant connected Lie subgroup. If $H$ is compact, then $H\subset G^0$.
\end{proposition}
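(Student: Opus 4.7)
My plan is to reduce the claim to showing that the restriction $\DC_0 := \DC|_{\fh}$ to the Lie algebra of $H$ has only eigenvalues of zero real part. This restriction is a well-defined derivation of $\fh$ because $H$ being $\varphi$-invariant forces $\fh$ to be $\DC$-invariant, as noted just before Proposition \ref{subgroups}. Once every eigenvalue of $\DC_0$ is purely imaginary, the generalized eigenspace decomposition of $\fh$ inherited from the one of $\DC$ on $\fg$ lies entirely inside $\fg^0$, so $\fh \subset \fg^0$; since $H$ and $G^0$ are both connected Lie subgroups, this forces $H \subset G^0$.

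To analyze $\DC_0$, I would invoke the structure of compact Lie algebras. Since $H$ is compact and connected, $\fh$ is reductive and decomposes as $\fh = \fz(\fh) \oplus [\fh,\fh]$, where $[\fh,\fh]$ is semisimple and compact. Both summands are characteristic ideals of $\fh$ (they are defined intrinsically from the bracket), hence are preserved by any derivation of $\fh$ and in particular by $\DC_0$, so I may analyze each piece separately.

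On the semisimple part, the standard fact that every derivation of a semisimple Lie algebra is inner gives $\DC_0|_{[\fh,\fh]} = \ad(Z)$ for some $Z \in [\fh,\fh]$. Compactness of the semisimple Lie algebra $[\fh,\fh]$ means the negative of its Killing form is a positive-definite $\ad$-invariant inner product; with respect to it $\ad(Z)$ is skew-symmetric, so its eigenvalues are purely imaginary. For the center, consider the identity component $T := Z(H)^0$: it is a closed compact connected abelian Lie subgroup of $H$, hence a torus, with Lie algebra $\fz(\fh)$. It is $\varphi$-invariant because each $\varphi_t|_H$ is an automorphism of $H$ and therefore sends the center to itself. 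Now $\mathrm{Aut}(T) \cong \mathrm{GL}(\dim T, \Z)$ is discrete, so the smooth one-parameter family $t \mapsto \varphi_t|_T$ must be constantly the identity; differentiating at $t = 0$ gives $\DC_0|_{\fz(\fh)} = 0$.

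Combining the two pieces, every eigenvalue of $\DC_0$ is purely imaginary, which completes the reduction. The only slightly delicate step is the passage from ``$t \mapsto \varphi_t|_T$ is a continuous curve in a discrete group'' to ``it is constantly the identity''; aside from that, the ingredients are classical structure theorems (reductivity of compact Lie algebras, innerness of derivations of semisimple Lie algebras, and skew-symmetry of $\ad(Z)$ with respect to $-B$ on a compact semisimple algebra) which I would simply cite.
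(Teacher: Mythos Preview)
Your proof is correct and follows essentially the same route as the paper: both arguments use the reductive splitting $\fh=\fz(\fh)\oplus[\fh,\fh]$, handle the semisimple summand via negative-definiteness of the Killing form (the paper phrases this as ``any derivation is skew-symmetric for the Cartan--Killing form'' rather than first passing through $\DC_0|_{[\fh,\fh]}=\ad(Z)$, but the content is the same), and dispose of the center by the torus/discrete-automorphism-group argument. No substantive difference.
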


\begin{proof}
Let $\fh$ denote the Lie subalgebra of $H$. By Corollary 4.25 of \cite{Knapp} we can decompose $\fh$ as $\fh=\fz(\fh)\oplus[\fh, \fh]$ where $\fz(\fh)$ is the center of $\fh$ and $[\fh, \fh]$ is semisimple. Since $H$ is connected and $\varphi$-invariant, we have that $\fh$ is $\DC$-invariant and since $\DC$ is a derivation it restricts to $\fz(\fh)$ and to $[\fh, \fh]$. 

The fact that $H$ is a compact subgroup together with the fact that $[\fh, \fh]$ is semisimple implies that the Cartan-Killing form restricted to $[\fh, \fh]$ is nondegene\-rated and negative definite (see \cite{ALEB} Chapter 4, Proposition 1.3). Since any derivation is skew symmetric by the Cartan Killing form we have that $\DC$ res\-tricted to $[\fh, \fh]$ must have only eigenvalues with zero real part which implies that $[\fh, \fh]\subset\fg^0$.

By Theorem 4.29 of \cite{Knapp} we have that the connected Lie subgroup associated with $\fz(\fh)$ is $Z(H)_0$ the connected component of the center of $H$. Since $Z(H)_0$ is connected, compact and abelian it is a torus and since the group of automorphisms of a torus is discrete we must have that $\varphi_t|_{Z(H)_0}=\id$ by continuity. Moreover, the fact that the exponential map of an abelian Lie group is a di\-ffeomorphism implies, by Proposition \ref{derivativeonorigin}, that $\DC(\fz(\fh))=0$ or that $\fz(\fh)\subset\ker\DC\subset\fg^0$ which concludes the proof.
\end{proof}

As a direct corollary we have:

\begin{corollary}
If $G$ is s compact Lie group and $\XC$ is a linear vector field on $G$, then the associated derivation $\DC$ has only eigenvalues with zero real part.
\end{corollary}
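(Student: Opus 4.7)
The plan is to apply Proposition \ref{compact} directly, taking $H = G$ itself. Since the standing assumption in the paper is that $G$ is connected, and here $G$ is moreover assumed compact, the only thing to check is that $G$ is $\varphi$-invariant, which is automatic: by Theorem \ref{propertieslinear} the flow $\varphi_t$ consists of automorphisms of $G$, so $\varphi_t(G) = G$ for every $t \in \R$.

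With those hypotheses verified, Proposition \ref{compact} yields $G \subset G^0$. Since $G^0$ is by definition a subgroup of $G$, this forces $G = G^0$, and passing to the Lie algebras gives $\fg = \fg^0 = \bigoplus_{\alpha;\,\mathrm{Re}(\alpha)=0} \fg_\alpha$. In other words, every generalized eigenspace of $\DC$ with nonzero real part of the eigenvalue must be trivial, so $\DC$ has only eigenvalues with zero real part.

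There is no real obstacle here: the work is already done inside Proposition \ref{compact}, and the corollary is just the special case $H = G$. The only pitfall to avoid is forgetting to invoke the fact that a linear vector field's flow preserves $G$ (i.e.\ the $\varphi$-invariance of $G$), which is needed to apply the proposition.
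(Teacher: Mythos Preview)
Your proposal is correct and is exactly the argument the paper has in mind: the corollary is stated as ``a direct corollary'' of Proposition~\ref{compact} with no separate proof, and your specialization $H=G$ (with $\varphi$-invariance coming from Theorem~\ref{propertieslinear}) is precisely that direct application.
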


We will conclude the section with a technical lemma which will be used several times in the subsequent sections, its proof can be found in \cite{MW} Lemma 3.1.

\begin{lemma}
\label{salvador}
Let $G$ be a Lie group with Lie algebra $\fg$ and $N$ a normal subgroup of $G$ with Lie algebra $\fn$. Then, for any $X\in\fg$ we have that 
$$\exp(X+\fn)\subset\exp(X)N.$$
\end{lemma}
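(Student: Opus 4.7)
The plan is to exploit the naturality of the exponential map under the quotient homomorphism $\pi \colon G \to G/N$. Assume, as is standard, that $N$ is closed (which is the setting in which the lemma is applied and under which $G/N$ inherits a Lie group structure; if $N$ is only an immersed normal Lie subgroup, one passes to its closure, which remains normal and contains $N$). Then $G/N$ is a Lie group whose Lie algebra is canonically identified with $\fg/\fn$, and the differential of $\pi$ at the identity is precisely the quotient projection $d\pi_e \colon \fg \to \fg/\fn$, whose kernel is $\fn$.

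The key ingredient is the functoriality of $\exp$ with respect to smooth homomorphisms of Lie groups: for every $Z\in\fg$,
$$\pi(\exp_G Z) = \exp_{G/N}(d\pi_e Z).$$
Given $X\in\fg$ and any $Y\in\fn$, linearity of $d\pi_e$ and the fact that $\fn=\ker d\pi_e$ give $d\pi_e(X+Y) = d\pi_e(X)$, hence
$$\pi(\exp_G(X+Y)) = \exp_{G/N}(d\pi_e(X+Y)) = \exp_{G/N}(d\pi_e X) = \pi(\exp_G X).$$
This says that $\exp(X+Y)$ and $\exp(X)$ have the same image in $G/N$, so they differ by multiplication by an element of $N$ on the right; that is, $\exp(X+Y)\in \exp(X)N$. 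Since $Y\in\fn$ was arbitrary, the inclusion $\exp(X+\fn)\subset\exp(X)N$ follows.

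The only conceptual step is the availability of the quotient Lie group structure and the naturality formula $\pi\circ\exp_G=\exp_{G/N}\circ\, d\pi_e$; both are classical. Once one has them, the proof is essentially a one-line diagram chase, and the main (minor) subtlety is simply to verify that the hypotheses of the lemma guarantee $G/N$ is a Lie group, which is why the normality and (typically implicit) closedness of $N$ are required.
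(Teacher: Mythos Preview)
Your quotient argument is correct and is the standard route to this lemma; note that the paper itself does not supply a proof but simply refers to Lemma~3.1 of \cite{MW}. When $N$ is closed, $G/N$ is a Lie group with Lie algebra $\fg/\fn$, naturality of the exponential gives $\pi(\exp(X+Y))=\pi(\exp X)$ for every $Y\in\fn$, and the conclusion follows immediately, exactly as you wrote.

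One small correction: your parenthetical fix for the non-closed case does not work as stated. Passing to the closure $\bar N$ only yields $\exp(X+\fn)\subset\exp(X)\,\bar N$, which is weaker than what is claimed. A clean way to drop the closedness hypothesis is to lift to the simply connected cover $p:\tilde G\to G$: since normality of $N$ makes $\fn$ an ideal, the connected subgroup $\tilde N\subset\tilde G$ with Lie algebra $\fn$ is automatically closed and normal, so your quotient argument applies verbatim in $\tilde G$; projecting by $p$ then gives $\exp_G(X+Y)\in\exp_G(X)\,p(\tilde N)\subset\exp_G(X)N$, because $p(\tilde N)$ is the identity component of $N$. Alternatively one can bypass quotients altogether by observing that the curve $\gamma(t)=\exp(-tX)\exp\bigl(t(X+Y)\bigr)$ satisfies $\gamma(0)=e$ and has right logarithmic derivative $e^{-t\,\ad X}Y\in\fn$, hence stays in the integral subgroup $N$; this ODE argument is presumably closer in spirit to the proof in \cite{MW}.
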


\subsection{Linear systems on Lie groups}

Let $\Omega$ be a subset of $\R^m$ such that $0\in\inner\Omega$ and consider the class of admissible control functions $\UC\subset L^{\infty}(\R, \Omega\subset\R^m)$. 

A {\bf linear system} on a Lie group $G$ is the family of differential equations
\begin{equation}
\label{linearsystem}
\dot{g}(t)=\XC(g(t))+\sum_{j=1}^mu_j(t)X_j(g(t)),
\end{equation}
where the drift vector field $\XC$ is a linear vector field, $X_j$ are right invariant vector fields and $u=(u_1, \cdots,u_m)\in\UC$. 

We will say that the system is {\bf bounded} if $\Omega$ is a compact convex subset of $\R^m$ and {\bf unbounded} if $\Omega=\R^m$.

\bigskip
The usual example of a linear system is the one on $\mathbb{R}^d$ given by
$$\dot{x}(t)=Ax(t)+Bu(t); \;\;A\in\mathbb{R}^{d\times d}, B\in\mathbb{R}^{d\times m}.$$
Since the right (left) invariant vector fields on the abelian Lie group $\mathbb{R}^d$ are given by constant vectors we can write the above system as 
$$\dot{x}(t)=Ax(t)+\sum_{i=1}^mu_i(t)b_i, \hspace{.5cm} B=(b_1| b_2| \cdots|b_m)$$
showing that it is a linear system in the sense of (\ref{linearsystem}). 
\bigskip

For a given $u\in\mathcal{U}$ and $t\in\mathbb{R}$ let us denote by $\phi_{t, u}:=\phi_{t, u}(e)$ the solution of (\ref{linearsystem}) starting at the neutral element $e\in G$. Then, for any $g\in G$ the solutions of (\ref{linearsystem}) starting at $g$ are given by
$$\phi_{t, u}(g)=\phi_{t, u}\cdot\varphi_t(g)=L_{\phi_{t, u}}(\varphi_t(g)),$$
where $L_h$ stands for the left translation on $G$ (see for instance Proposition 3.3 of \cite{DaSilva}). 
The map 
$$\phi:\mathbb{R}\times G\times\mathcal{U}\rightarrow G, \;\;\;(t, g, u)\mapsto\phi_{t, u}(g),$$
satisfies the {\bf cocycle property}
$$\phi_{t+s, u}(g)=\phi_{t, \Theta_su}(\phi_{s, u}(g)))$$
for all $t, s\in\mathbb{R}$, $g\in G$, $u\in\mathcal{U}$, where $\Theta_t:\UC\rightarrow\UC$ is the shift flow $u\in\UC\mapsto\Theta_tu:=u(\cdot +t)$. It follows directly from the cocycle property that the diffeomorphism $\phi_{t, u}$ has inverse $\phi_{-t, \Theta_{t}u}$ for any $t\in\R$ and $u\in\UC$. Also from the cocycle property and the fact that $\phi_{t, u}(g)$ just depends on $u|_{[0, t]}$ for any $t>0$ we have that 
$$\phi_{t, u_1}(\phi_{s, u_2}(g))=\phi_{t+s, u}(g)$$
where $u\in\UC$ is defined by $u(\tau)=u_1(\tau)$ for $\tau\in [0, s]$ and $u(\tau)=u_2(\tau-s)$ for $\tau\in[s, t+s]$. The function $u$ above is said to be the {\bf concatenation} of $u_1$ and $u_2$.

For any $g\in G$ the sets
\begin{equation}
\label{reachablesets}
\begin{array}{l}
\AC_{\tau}(g):=\{\phi_{\tau, u}(g), u\in\UC\}\\
\\
\AC(g):=\bigcup_{\tau>0}\AC_{\tau}(g),
\end{array}
\end{equation}
are the {\bf set of points reachable from $g$ at time $\tau$}  and the {\bf reachable set of $g$}, respectively. When $g=e$ is the neutral element of $G$ the sets $\AC_{\tau}(e)$ and $\AC(e)$ are said to be the {\bf reachable set at time $\tau$} and the {\bf reachable set} and are denoted, respectively, by $\AC_{\tau}$ and $\AC$.

When the system is bounded, the map $u\in\UC\mapsto\phi_{\tau, u}(g)\in G$ is continuous for any $\tau>0$ and $g\in G$. Consequently the sets $\AC_{\tau}(g)$ are compact sets (see for instance \cite{CKawan} Theorem 1.1).

We say that the system (\ref{linearsystem}) is {\bf controllable} if for some (and hence for all) $g\in M$ we have that $h\in\AC(g)$ and $g\in\AC(h)$ for all $h\in G$.

Let $\DC$ be the derivation associated with the linear vector field $\XC$ and let $\fh$ to be the smallest $\DC$-invariant Lie subalgebra of $\fg$ containing $X_i$, for $i=1, \ldots, m$. We say that the system (\ref{linearsystem}) satisfies the {\bf Lie algebra rank condition} if we have that $\fh=\fg$. By Krener's Theorem (see \cite{FCWK}, Theorem A.4.4), if the system satisfies the Lie algebra rank condition then the interior of $\AC_{\tau}(g)$ is nonempty for any $\tau>0$ and $g\in G$.

Let 
$$\dot{g}_j(t)=\XC^j(g(t))+\sum_{i=1}^mu_i(t)X^j_i(g(t)), \;\;\;\;u=(u_1, \ldots, u_m)\in\mathcal{U}$$
be linear systems on Lie groups $G_j$, $j=1, 2$ and consider a continuous map $\pi:G_1\rightarrow G_2$. We say that $\pi$ is a {\bf semi-conjugation} between the systems, if for any $g\in G_1$, $u\in\UC$ and $t\in\R$ we have
\begin{equation}
\label{semiconjugation}
\pi(\phi^1_{t, u}(g))=\phi^2_{t, u}(\pi(x))
\end{equation}
where $\phi^j$ denotes the respective solutions of the above systems. If $\pi$ is a homeo\-morphism we say that it is a {\bf conjugation} between the systems.

\begin{proposition}
\label{reachable}
It holds:
\begin{itemize}
\item[1.] if $0\leq \tau_1\leq \tau_2$ then $\AC_{\tau_1}\subset\AC_{\tau_2}$
\item[2.] for all $g\in G$ it holds that $\AC_{\tau}(g)=\AC_{\tau}\varphi_{\tau}(g)$;
\item[3.] for all $\tau, \tau'\geq 0$ we have $\AC_{\tau+\tau'}=\AC_{\tau}\varphi_{\tau}(\AC_{\tau'})=\AC_{\tau'}\varphi_{\tau'}(\AC_{\tau})$ and inductively that
$$\AC_{\tau_1}\varphi_{\tau_1}(\AC_{\tau_2})\varphi_{\tau_1+\tau_2}(\AC_{\tau_3})\cdots\varphi_{\sum^{n-1}_{i=1}\tau_i}(\AC_{\tau_n})= \AC_{\sum^n_{i=1}\tau_i}$$ 
for any positive real numbers $\tau_1, \ldots, \tau_n$;
\item[4.] for all $u\in\UC$, $g\in G$ and $t>0$ we have that $\phi_{t, u}(\AC(g))\subset\AC(g)$;
\item[5.] $e\in\inner\AC$ if, and only if, $\AC$ is open.
\end{itemize}
\end{proposition}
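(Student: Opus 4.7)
The plan is to reduce all five items to two facts already in the setup: the concatenation/cocycle identity $\phi_{t,u_1}(\phi_{s,u_2}(g)) = \phi_{t+s,u}(g)$ for suitable concatenated $u$, and the left translation formula $\phi_{t,u}(g) = \phi_{t,u}\,\varphi_t(g)$. An initial observation I would record first and use throughout is that since $0\in\inner\Omega$ the zero control belongs to $\UC$ and the resulting solution is $\phi_{t,0}\equiv\varphi_t$; evaluated at the neutral element this gives $\phi_{t,0}(e)=\varphi_t(e)=e$, so $e\in\AC_\tau$ for every $\tau>0$, and in particular $e\in\AC$.

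For item 1, given $g=\phi_{\tau_1,u}(e)\in\AC_{\tau_1}$ and $\tau_2\geq\tau_1$, I concatenate the zero control on $[0,\tau_2-\tau_1]$ with $u$ on $[\tau_2-\tau_1,\tau_2]$; the cocycle identity yields $g=\phi_{\tau_1,u}(\phi_{\tau_2-\tau_1,0}(e))$, which is a solution at time $\tau_2$ starting from $e$. Item 2 is just the identity $\phi_{\tau,u}(g)=\phi_{\tau,u}\varphi_\tau(g)$ taken over all $u\in\UC$. For item 3, combining the cocycle identity with item 2 shows that any point of $\AC_{\tau+\tau'}$ is reached by starting at some $h\in\AC_{\tau'}$ and evolving for time $\tau$, producing $\AC_\tau(h)=\AC_\tau\varphi_\tau(h)$; taking unions gives the equality, and the multi-step version follows by an induction on $n$. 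For item 4, write $h\in\AC(g)$ as $h=\phi_{s,v}(g)$ and use concatenation to express $\phi_{t,u}(h)=\phi_{t+s,\tilde u}(g)\in\AC_{t+s}(g)\subset\AC(g)$.

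For item 5, the reverse implication is immediate from the preliminary observation: if $\AC$ is open then $e\in\AC$ forces $e\in\inner\AC$. For the forward implication, let $V$ be an open neighborhood of $e$ contained in $\AC$ and pick an arbitrary $g=\phi_{s,v}(e)\in\AC$; since $\phi_{s,v}$ is a diffeomorphism of $G$, the set $\phi_{s,v}(V)$ is an open neighborhood of $g$, and for each $h=\phi_{t,w}(e)\in V\subset\AC$ the concatenation argument of item 4 places $\phi_{s,v}(h)$ in $\AC_{s+t}(e)\subset\AC$. Hence $\phi_{s,v}(V)\subset\AC$ and $g\in\inner\AC$, so $\AC\subset\inner\AC$ and $\AC$ is open. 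None of these steps presents a real obstacle; the only point requiring care is consistent bookkeeping of the concatenation indexing so that one is always composing solutions of the same control system, and remembering that the admissibility of $u\equiv 0$ is what anchors $e$ inside every $\AC_\tau$.
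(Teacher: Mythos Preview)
Your proposal is correct and follows essentially the same route as the paper: items 1--3 are handled via the concatenation/cocycle identity and the left-translation formula (the paper simply cites \cite{JPh} for these, while you spell them out), item 4 is the same concatenation argument verbatim, and for item 5 you push an open neighborhood of $e$ forward through the diffeomorphism $\phi_{s,v}$ and invoke item 4, exactly as the paper does. Your explicit remark that $0\in\inner\Omega$ forces $e\in\AC_\tau$ for every $\tau$ is the anchor the paper uses implicitly when it writes ``direct since $e\in\AC$''.
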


\begin{proof}
The proof of items 1. to 3.  can be found for instance in (\cite{JPh}, Proposition 2). 

For item 4. consider $h\in\AC(g)$. There are $s>0$ and $u'\in\UC$ such that $h=\phi_{s, u'}(g)$. If we consider $u''\in\UC$ to be the concatenation of $u'$ and $u$ we have that $\phi_{t, u}(h)=\phi_{t, u}(\phi_{s, u'}(g))=\phi_{t+s, u''}(g)\in\AC(g)$ showing that $\phi_{t, u}(\AC(g))\subset\AC(g)$.

The ``only if" part of item 6. is direct since $e\in\AC$. Let us then assume that $e\in\inner\AC$ and let $g\in\AC$. There exists $t>0$ and $u\in\UC$ such that $g=\phi_{t, u}$. By hypothesis we can find an open neighborhood $B$ of $e$ such that $B\subset\AC$. If we consider $C=\phi_{t, u}(B)$, we have that $C$ is an open neighborhood of $g$ and by item 5. above $C=\phi_{t, u}(B)\subset\phi_{t, u}(\AC)\subset\AC$ showing that $\AC$ is open.
\end{proof}

\begin{remark}
The third item of the above Proposition shows that the reachable set at time $\tau+\tau'$ is the product of the rea\-chable set at time $\tau$, by the image of the reachable set at time $\tau'$ by the flow $\varphi_{\tau}$.
\end{remark}

\begin{remark}
We should remark that the item 4. of the above proposition together with the fact that $0\in\inner\Omega$ shows us in particular that $\AC$ is invariant by $\varphi_t$ for any $t\geq 0$. 
\end{remark}

\section{The reachable set}

In this section we will show that under certain assumptions we have that the connected subgroup $G^{+, 0}$ is contained in the reachable set $\AC$. 

Let us assume from now on that $\AC$ is open\footnote{Remark 3.9 at the end of the section will give us an algebraic way to assure the openness of $\AC$.}. This condition implies, in parti\-cular, that the linear system satisfies the rank condition (see \cite{VAJT} Theorem 3.3).

We will start with some results concerning invariance.

\begin{lemma}
\label{pointinvariance}
Let $g\in\AC$ and assume that $\varphi_t(g)\in\AC$ for any $t\in\R$. Then $\AC\cdot g\subset\AC$.
\end{lemma}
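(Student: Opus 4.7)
The plan is to exploit two tools established earlier in the section: the key formula $\phi_{t,u}(g') = \phi_{t,u}\cdot\varphi_t(g')$ for solutions starting at an arbitrary point, and the concatenation property which lets us glue two trajectories into one trajectory lying in $\AC$.

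Let $h\in\AC$ be arbitrary; I want to show $hg\in\AC$. Write $h=\phi_{\tau,u}$ for some $\tau>0$ and $u\in\UC$. By hypothesis, $\varphi_{-\tau}(g)\in\AC$, so there exist $s>0$ and $v\in\UC$ with $\varphi_{-\tau}(g)=\phi_{s,v}=\phi_{s,v}(e)$.

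Now apply $\phi_{\tau,u}(\cdot)$ to $\varphi_{-\tau}(g)$. Using the formula for solutions through an arbitrary initial condition,
\[
\phi_{\tau,u}(\varphi_{-\tau}(g)) = \phi_{\tau,u}\cdot\varphi_{\tau}(\varphi_{-\tau}(g)) = h\cdot g.
\]
On the other hand, writing $w\in\UC$ for the concatenation of $v$ (on $[0,s]$) with $u$ (on $[s,s+\tau]$), the concatenation identity from the discussion of the cocycle property gives
\[
\phi_{\tau,u}(\varphi_{-\tau}(g)) = \phi_{\tau,u}(\phi_{s,v}(e)) = \phi_{\tau+s,w}(e).
\]
Combining the two displays, $hg = \phi_{\tau+s,w}(e)\in\AC_{\tau+s}\subset\AC$, which is what we wanted.

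There is no real obstacle here: the lemma is essentially a bookkeeping exercise, and the only subtle point is remembering to use $\varphi_{-\tau}(g)\in\AC$ (not $g$ itself) so that after flowing for additional time $\tau$ with the control $u$, the group element $\varphi_\tau$ applied to $\varphi_{-\tau}(g)$ gives back precisely $g$, producing the left translate $hg$ on the nose.
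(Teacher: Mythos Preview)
Your proof is correct and follows essentially the same approach as the paper's: both pick $h=\phi_{\tau,u}\in\AC$, use the hypothesis to write $\varphi_{-\tau}(g)\in\AC_s$ for some $s>0$, and then conclude $hg\in\AC_{\tau+s}$. The only cosmetic difference is that the paper invokes the set-level identity $\AC_\tau\varphi_\tau(\AC_s)=\AC_{\tau+s}$ from Proposition~\ref{reachable}, whereas you unpack that identity at the level of individual trajectories via the concatenation property.
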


\begin{proof}
Let $h=\phi_{t, u}\in\AC$. By hypothesis $\varphi_{-t}(g)\in\AC$ and there exists then $\tau>0$ such that $\varphi_{-t}(g)\in\AC_{\tau}$. Consequently
$$hg=h\varphi_t(\varphi_{-t}(g))\in \AC_t\varphi_t(\AC_{\tau})=\AC_{\tau+t}\subset\AC$$ 
which concludes the proof.
\end{proof}

As direct corollary we have the following:

\begin{corollary}
\label{invarianceprop}
If $H$ is a connected $\varphi$-invariant subgroup of $G$ with Lie algebra $\fh$ and $\exp X\in\AC$ for any $X\in\fh$ then $H\subset\AC$.
\end{corollary}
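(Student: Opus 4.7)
The plan is to use the connectedness of $H$ to write every element of $H$ as a finite product of exponentials of elements in $\fh$, and then apply Lemma \ref{pointinvariance} repeatedly to each factor.

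First I would recall the standard Lie-theoretic fact that, because $H$ is connected, it is generated by $\exp(\fh)$; equivalently, every $h\in H$ admits a representation
\[
h=\exp(X_1)\exp(X_2)\cdots\exp(X_n)
\]
for some integer $n\geq 1$ and some $X_1,\ldots,X_n\in\fh$. This reduces the problem to showing that every such product lies in $\AC$.

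Next I would check that each single factor $g=\exp X$, $X\in\fh$, meets the hypotheses of Lemma \ref{pointinvariance}. By the standing assumption, $g\in\AC$. For the $\varphi$-invariance along the orbit of $g$, I use that $H$ being $\varphi$-invariant and connected forces $\fh$ to be $\DC$-invariant (as noted in the paragraph preceding Proposition \ref{subgroups}), so $\rme^{t\DC}X\in\fh$ for every $t\in\R$, and Proposition \ref{derivativeonorigin} gives
\[
\varphi_t(\exp X)=\exp\bigl(\rme^{t\DC}X\bigr)\in\AC
\]
by the hypothesis of the corollary. Thus Lemma \ref{pointinvariance} applies and yields $\AC\cdot\exp X\subset\AC$ for every $X\in\fh$.

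Finally I would close the argument by induction on $n$. The base case $n=1$ is just the hypothesis $\exp X_1\in\AC$. For the inductive step, assuming $\exp(X_1)\cdots\exp(X_{n-1})\in\AC$, one writes
\[
\exp(X_1)\cdots\exp(X_n)=\bigl(\exp(X_1)\cdots\exp(X_{n-1})\bigr)\cdot\exp(X_n)\in\AC\cdot\exp(X_n)\subset\AC,
\]
using the inclusion obtained in the previous step. No real obstacle is expected; the only subtle point is making sure the $\varphi$-invariance of $H$ transfers to $\DC$-invariance of $\fh$ so that $\rme^{t\DC}X$ stays in $\fh$ and the hypothesis of the corollary can be invoked along the entire orbit $\varphi_t(\exp X)$.
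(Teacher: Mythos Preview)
Your proof is correct and follows essentially the same approach as the paper: both use $\DC$-invariance of $\fh$ (from $\varphi$-invariance of $H$) together with Proposition \ref{derivativeonorigin} to verify that $\varphi_t(\exp X)\in\AC$ for all $t$, then apply Lemma \ref{pointinvariance} to conclude that finite products of exponentials lie in $\AC$, which suffices by connectedness of $H$. Your version is simply more explicit about the induction on the number of factors, which the paper compresses into a single sentence.
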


\begin{proof}
Since $H$ is connected and $\varphi$-invariant, its Lie algebra $\fh$ is $\DC$-invariant which implies that $\rme^{t\DC}X\in\fh$ for any $X\in\fh$ and any $t\in\R$. Consequently 
$$\varphi_t(\exp X)=\exp(\rme^{t\DC}X)\in\AC, \;\;\;\;t\in \R$$
and by the above lemma, any finite product of exponential of the elements of $\fh$ is in $\AC$ which implies that $H\subset\AC$.
\end{proof}

\begin{corollary}
\label{neighborhood}
If $H$ is a connected Lie subgroup and there exists a $\varphi$-invariant neighborhood of the neutral element $B\subset H\cap\AC$, then $H$ is $\varphi$-invariant and $H\subset \AC$.
\end{corollary}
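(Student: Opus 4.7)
The plan is to verify the hypotheses of Corollary \ref{invarianceprop}, after first establishing the $\varphi$-invariance of $H$. Since $B$ is a neighborhood of the neutral element in $H$ and $H$ is a connected Lie group, $B$ generates $H$: every $h\in H$ can be written as a product $h=b_1\cdots b_k$ with $b_i\in B$. Because $\varphi_t$ is a group automorphism (Theorem \ref{propertieslinear}), we get $\varphi_t(h)=\varphi_t(b_1)\cdots\varphi_t(b_k)$, and the $\varphi$-invariance of $B$ forces each $\varphi_t(b_i)\in B\subset H$. Hence $\varphi_t(H)\subset H$; the same argument with $-t$ gives the reverse inclusion, so $\varphi_t(H)=H$ for all $t\in\R$.

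Next, I would reduce to Corollary \ref{invarianceprop} by proving that $\exp X\in\AC$ for every $X\in\fh$. Since $B$ is a neighborhood of $e$ in $H$, there exists a neighborhood $V$ of $0$ in $\fh$ with $\exp(V)\subset B$. Given $X\in\fh$, pick $n\in\N$ so that $X/n\in V$, and set $g:=\exp(X/n)\in B\subset\AC$. Using $\varphi$-invariance of $B$ together with Proposition \ref{derivativeonorigin}, $\varphi_t(g)\in B\subset\AC$ for every $t\in\R$, so Lemma \ref{pointinvariance} yields $\AC\cdot g\subset\AC$. Iterating this inclusion, $g^k\in\AC$ for every $k\geq 1$; in particular $\exp X=\exp(X/n)^n=g^n\in\AC$.

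With $H$ connected and $\varphi$-invariant and every $\exp X$ ($X\in\fh$) in $\AC$, Corollary \ref{invarianceprop} immediately gives $H\subset\AC$. The only substantive step is the second one, where the real content is that $\varphi$-invariance of the small set $B$ plus Lemma \ref{pointinvariance} let us turn the fractional generator $\exp(X/n)\in\AC$ into the full group element $\exp(X)\in\AC$ by repeated right multiplication; no additional controllability input is needed.
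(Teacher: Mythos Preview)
Your argument is correct and rests on the same engine as the paper's proof, namely Lemma~\ref{pointinvariance} applied to elements of the $\varphi$-invariant set $B\subset\AC$. The paper, however, proceeds more directly: from Lemma~\ref{pointinvariance} one gets $B^n\subset\AC$ for every $n\in\N$, and since $H$ is connected and $B$ is a neighborhood of $e$ in $H$, one has $H=\bigcup_{n\in\N}B^n\subset\AC$ immediately (the $\varphi$-invariance of $H$ then follows from $\varphi_t(B^n)=B^n$). Your route---showing $\exp X\in\AC$ for every $X\in\fh$ via powers of $\exp(X/n)$ and then invoking Corollary~\ref{invarianceprop}---is a valid but slightly circuitous repackaging of the same idea: the detour through Corollary~\ref{invarianceprop} is unnecessary because that corollary itself is proved by the very product argument you are already carrying out. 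Your explicit verification of the $\varphi$-invariance of $H$ is a nice touch, since the paper leaves that conclusion implicit.
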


\begin{proof}
Using Lemma \ref{pointinvariance} we have by the $\varphi$-invariance of $B$ that $B^n\subset\AC$ for any $n\in\N$. Since $H$ is connected we have that $H=\bigcup_{n\in\N}B^n$ which implies the result.
\end{proof}

The next result is a generalization of Proposition 5 of \cite{JPhDM}.

\begin{proposition}
\label{nilradical}
Let $\fh$ be a Lie subalgebra of $\fg$ and $\fn$ be an ideal of $\fh$ such that $\DC(\fh)\subset\fn$. If $N\subset\AC$, then $H\subset\AC$, where $N$ and $H$ are the connected Lie subgroups of with Lie algebras $\fn$ and $\fh$, respectively.
\end{proposition}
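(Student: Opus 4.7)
My plan is to reduce the claim to showing $\exp X \in \AC$ for every $X \in \fh$, and then, for each such $X$ separately, run a connectedness argument along the one--parameter subgroup $\R \ni s \mapsto \exp(sX)$. Since $\DC(\fh) \subset \fn \subset \fh$, both $\fh$ and $\fn$ are $\DC$-invariant, so $H$ and $N$ are $\varphi$-invariant; also $N$ is normal in $H$ because $\fn$ is an ideal of $\fh$. Combining the hypothesis $N \subset \AC$ with the $\varphi$-invariance of $N$, Lemma \ref{pointinvariance} gives $\AC \cdot n \subset \AC$ for every $n \in N$, hence $\AC \cdot N = \AC$. Once $\exp X \in \AC$ is proved for each $X \in \fh$, Corollary \ref{invarianceprop} immediately yields $H \subset \AC$.

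Fix $X \in \fh$ and set $S := \{ s \in \R : \exp(sX) \in \AC \}$. By openness of $\AC$ and continuity of $s \mapsto \exp(sX)$, there is $\epsilon > 0$ with $\exp(s'X) \in \AC$ whenever $|s'| < \epsilon$, so $(-\epsilon,\epsilon) \subset S$. The core of the argument is to show that $(s_0 - \epsilon, s_0 + \epsilon) \subset S$ for every $s_0 \in S$, with the same $\epsilon$. Because $\epsilon$ is independent of $s_0$, this simultaneously gives openness and closedness of $S$ in $\R$, forcing $S = \R$ and in particular $\exp X \in \AC$.

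For the inclusion $(s_0 - \epsilon, s_0 + \epsilon) \subset S$ I need $\varphi_t(\exp(s_0 X)) \in \AC$ for every $t \in \R$ (not just $t \geq 0$). Using $\DC$-invariance of $\fh$ together with $\DC(\fh) \subset \fn$,
$$s_0 e^{t\DC}X - s_0 X = s_0 \int_0^t \DC\bigl(e^{r\DC}X\bigr)\,dr \in \fn,$$
and Lemma \ref{salvador} (applied inside $H$ with normal subgroup $N$), combined with Proposition \ref{derivativeonorigin}, yields
$$\varphi_t(\exp(s_0 X)) = \exp(s_0 e^{t\DC}X) \in \exp(s_0 X)\, N \subset \AC \cdot N = \AC.$$
Lemma \ref{pointinvariance} then upgrades this to $\AC \cdot \exp(s_0 X) \subset \AC$, and the factorization $\exp(sX) = \exp((s - s_0)X)\,\exp(s_0 X)$ places $\exp(sX)$ in $\AC \cdot \exp(s_0 X) \subset \AC$ whenever $|s - s_0| < \epsilon$.

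The one slightly delicate point I anticipate is that Lemma \ref{salvador} must be applied with $H$ as the ambient group rather than $G$: $N$ is normal in $H$ but need not be normal in $G$, so the ambient group in the lemma has to be $H$, and the resulting inclusion transfers to $G$ via the equality $\exp_H = \exp_G|_{\fh}$ of the exponential maps.
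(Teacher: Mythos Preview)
Your proof is correct. It rests on the same two pivotal observations as the paper's argument: first, that for $X\in\fh$ one has $e^{t\DC}X-X\in\fn$, so Lemma~\ref{salvador} (applied inside $H$, as you correctly note) gives $\varphi_t(\exp X)\in(\exp X)N$; second, that the $\varphi$-invariance of $N$ combined with $N\subset\AC$ yields $\AC N=\AC$ via Lemma~\ref{pointinvariance}.

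Where you diverge is in how you pass from ``small elements of $H$ lie in $\AC$'' to ``all of $H$ lies in $\AC$''. The paper fixes a neighborhood $B=U\cap H$ of $e$ in $H$ and proves by induction on $n$ that $B^n\subset\AC$, using the explicit time-product formula $\AC_{\tau}\varphi_{\tau}(\AC_{\tau'})\varphi_{\tau+\tau'}(\AC_{\tau''})=\AC_{\tau+\tau'+\tau''}$ from Proposition~\ref{reachable}(3). You instead run a uniform-step connectedness argument along each one-parameter subgroup $s\mapsto\exp(sX)$ and then invoke Corollary~\ref{invarianceprop}. Your route is a bit more economical, since Corollary~\ref{invarianceprop} already encapsulates the product step that the paper redoes by hand; the paper's route is more self-contained, tracking reachability times explicitly rather than relying on the abstracted lemma. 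Both approaches are equally valid and of comparable length.
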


\begin{proof}
For any $X\in\fh$ and any $t\in\R$ we have that
$$\rme^{t\DC}X=X+\sum_{n\geq 1}\frac{t^n\DC^n}{n!}X=X+Y$$
where by the hypothesis $Y=Y_{t, X}\in\fn$. Since $\fn$ is an ideal, $N$ is normal and by Lemma \ref{salvador} we have that $\exp(X+Y)=(\exp X)g$ for some $g\in N$.

Let then $V$ to be an open neighborhood of $0\in \fg$ with $U=\exp(V)\subset\AC$ and such that $\exp|_{V}$ is diffeomorphism. Consider the open set of $H$ given by $B=U\cap H$. Since $H$ is connected we have that $H=\bigcup_{n\geq 1}B^n$. We will show inductively that $B^n\subset\AC$ which implies $H\subset\AC$.
If $n=1$ the construction of $B$ implies that $B\subset\AC$. Assume that $B^n\subset\AC$ and let $x\in B^{n+1}$. We can write $x=g_1\cdots g_{n+1}$ with $g_i\in B$, $i=1, \ldots n+1$. By the inductive hypothesis, we have that $h=g_1\cdots g_n\in \AC$. Let then $\tau>0$ such that $h\in\AC_{\tau}$. Since $g_{n+1}\in B$ there exists $\tau'>0$ with $g_{n+1}\in\AC_{\tau'}$ and we can write $g_{n+1}=\exp Z$ with $Z\in\fh$. By the above $\varphi_{\tau}(g_{n+1})=g_{n+1}g'$ with $g'\in N$. 
Since $\DC(\fh)\subset\fn$ we have in particular that $N$ is $\varphi$-invariant which implies that $g''=\varphi_{-\tau-\tau'}(g'^{-1})\in N\subset\AC$ and consequently that there is $\tau''>0$ such that $g''\in\AC_{\tau''}$. All together give us that
$$x=hg_{n+1}=h\varphi_{\tau}(g_{n+1})g'^{-1}=h\varphi_{\tau}(g_{n+1})\varphi_{\tau+\tau'}(g'')\in\AC_{\tau+\tau'+\tau''}\subset \AC$$
which concludes the proof.
\end{proof}

\subsection{Eigenvalues with zero real part}

Our aim here is to show that any solvable Lie subgroup of $G^0$ is contained in the reachable set. In order to do that the followind lemma will be central.

\begin{lemma}
\label{generalnilpotentcase}
Let $N\subset G^0$ to be a nilpotent connected $\varphi$-invariant Lie subgroup. Then $N\subset \AC$.
\end{lemma}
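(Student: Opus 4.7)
The plan is to proceed by induction on $\dim N$, the base case $\dim N = 0$ being trivial. For the inductive step, define the subspace
$$\fn_1 := \DC(\fn) + [\fn, \fn] \subset \fn.$$
Using the derivation identity $[X, \DC Y] = \DC[X,Y] - [\DC X, Y]$ together with $\DC(\fn) \subset \fn$, a short computation checks that $\fn_1$ is both $\DC$-invariant and an ideal of $\fn$, and trivially $\DC(\fn) \subset \fn_1$.

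If $\fn_1 \subsetneq \fn$, the associated connected Lie subgroup $N_1$ is a proper $\varphi$-invariant nilpotent subgroup of $G^0$, so by the inductive hypothesis $N_1 \subset \AC$. Since $\fn_1$ is an ideal of $\fn$ satisfying $\DC(\fn) \subset \fn_1$, Proposition \ref{nilradical} applied with $\fh = \fn$ immediately yields $N \subset \AC$.

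If on the other hand $\fn_1 = \fn$, the induced map $\bar\DC$ on the abelianization $\fn/[\fn,\fn]$ is surjective, hence invertible, and inherits a purely imaginary spectrum from $\DC|_\fn$, so all of its eigenvalues are nonzero and purely imaginary. In this case I would aim to produce a $\varphi$-invariant open neighborhood of $e$ inside $N \cap \AC$ and conclude via Corollary \ref{neighborhood}. The handle is the Jordan decomposition $\DC|_\fn = S + N'$, with $S$ semisimple and $N'$ nilpotent, $[S, N'] = 0$: by the classical fact that the Jordan components of a derivation are themselves derivations, both $S$ and $N'$ are derivations of $\fn$. Since $S$ has purely imaginary eigenvalues, $\rme^{tS}$ preserves a Euclidean inner product on $\fn$, giving a small $\rme^{tS}$-invariant ball around $0$. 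Enlarging this ball along the commuting $\rme^{tN'}$-direction produces a genuinely $\rme^{t\DC}$-invariant open subset of $\fn$, which one then matches with $\AC$ via the exponential map.

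The main obstacle is this second case: when $\fn_1 = \fn$, the nilpotent part $N'$ of $\DC|_\fn$ can generate unbounded orbits on $\fn$, so there is no bounded $\varphi$-invariant neighborhood of $0 \in \fn$. Producing the $\varphi$-invariant open piece of $\AC$ then requires combining the openness of $\AC$ with the invertibility of $\bar\DC$ and the nilpotent structure of $N$, so that the unbounded $\rme^{tN'}$-tubes of a small ball can be shown to sit inside $\AC$; this delicate interplay is where the argument becomes technical.
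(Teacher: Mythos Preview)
Your reduction via $\fn_1 := \DC(\fn) + [\fn,\fn]$ is clean when $\fn_1 \subsetneq \fn$, but the case $\fn_1 = \fn$ is not an edge case you can dismiss: it already occurs for $\fn$ abelian two-dimensional with $\DC|_\fn$ a rotation (a single pair of nonzero purely imaginary eigenvalues), and you do not prove it. You correctly observe that once the nilpotent Jordan part $N'$ is nonzero there is \emph{no} bounded $\rme^{t\DC}$-invariant neighborhood of $0$ in $\fn$, so Corollary~\ref{neighborhood} cannot be invoked by simply enlarging a small ball; the sentence about ``unbounded $\rme^{tN'}$-tubes sitting inside $\AC$'' is precisely the content of the lemma, not a route to it. As written, the proposal is incomplete.

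The paper does not attempt to split off the semisimple part or to build an invariant neighborhood. Instead it runs a double induction: outer on the lower central series $\fn = \fn_1 \supset \fn_2 \supset \cdots \supset \fn_{l+1} = 0$, inner on the Jordan height $j$ in the filtration $\fn_{i,\alpha}^j = \ker(\DC-\alpha)^j \cap \fn_i$. The key device you are missing is an \emph{averaging at resonant times}: since $\alpha$ is purely imaginary, choose $\tau>0$ with $\rme^{\tau\alpha}=1$; then for $v\in\fn_{i,\alpha}^j$ one has $\rme^{k\tau\DC}v = v + z_k$ with $z_k \in \fn_{i,\alpha}^{j-1}$. Writing $X = mv$ with $\exp v \in \AC$, one multiplies the elements $\varphi_{k_r\tau}(\exp v)$ and uses BCH; the commutator errors land in $\fn_{i+1}$ (handled by the outer induction via Lemma~\ref{salvador}) while the $z_k$ are handled by the inner induction on $j$. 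This is how the unbounded nilpotent drift is absorbed, and it is exactly the mechanism your proposal lacks.
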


\begin{proof}
Let us denote by $\fn$ the Lie algebra of $N$ and consider its lower central series  
$$\fn=\fn_1\supset \fn_2\supset\ldots\supset \fn_{l+1}=\{0\}$$ where for $i=2, \ldots, l+1$ we have that $\fn_i=[\fn, \fn_{i-1}]$ are ideals of $\fn$.
Since $N$ is connected and $\varphi$-invariant its Lie algebra is $\DC$-invariant and consequently $\fn_i$ is $\DC$-invariant for $i=1, 2, \ldots, l+1$. We have then the decomposition
$$\fn_i=\bigoplus_{\alpha; \mathrm{Re}(\alpha)=0}\fn_{i, \alpha}\;\;\;\;\mbox{ where }\;\;\;\;\fn_{i, \alpha}=\fn_i\cap\fg_{\alpha}.$$

Moreover, $\fn_{i, \alpha}$ is given as union of $\DC$-invariant subspace as
\begin{equation}
\label{invsubspaces}
\fn_{i, \alpha}=\bigcup_{j\in\N_0}\fn_{i, \alpha}^j\;\;\;\;\mbox{ where }\;\;\;\;\fn^j_{i, \alpha}:=\{X\in\fn_{i, \alpha} ; \;(\DC-\alpha)^jX=0\}.
\end{equation}

For any $i=1, \ldots, l+1$ us denote by $N_i$ the connected subgroup of $N$ with Lie algebra $\fn_i$. We have that $N_i$ is a normal subgroup of $N$ and it is $\varphi$-invariant. We will divide the rest of the proof in 3 steps.

\begin{itemize}
\item[Step 1:]If for any $\alpha$ with $\;\mathrm{Re}(\alpha)=0$ and any $X\in\fn_{i, \alpha}$ it holds that $\exp X\in\AC$ then $N_i\subset\AC$;

Since $\fn_i$ is given as direct sum of the $\fn_{i, \alpha}$ with $\alpha$ as above, we have that the subset of $N_i$ given by
$$B:=\prod_{\alpha; \mathrm{Re}(\alpha)=0}\exp\fn_{i, \alpha}$$
is a neighborhood of the neutral element in $N_i$. Moreover, since
$$\varphi_t(\exp\fn_{i, \alpha})=\exp(\rme^{t\DC}(\fn_{i, \alpha}))=\exp \fn_{i, \alpha},\;\;\;\;t\in\R$$
$B$ is certainly $\varphi$-invariant and the hypothesis that $\exp X\in\AC$ for any $X\in\fn_{i, \alpha}$ together with Lemma \ref{pointinvariance} implies that $B\subset\AC$. Using Corollary \ref{neighborhood} we have then that $N_i\subset\AC$.

\item[Step 2:]If $N_{i+1}\subset\AC$ then $N_i\subset\AC$ for $i=0, \ldots l$;

By the above we just have to show that $\exp X\in\AC$ for any $X\in\fn_{i, \alpha}$ and any $\alpha$ with $\mathrm{Re}(\alpha)=0$. Using the decomposition (\ref{invsubspaces}) it is enough to prove that, for any $j\in\N_0$ and $X\in\fn_{i, \alpha}^j$ it holds that $\exp X\in\AC$. The last statement will be proved using inducion on $j\in\N_0$.

If $j=0$ we have that $\fn_{i, \alpha}^0=\{0\}$ and since $e\in\AC$ by hypothesis, the result holds. Let us assume then that for $j>0$ it holds that $\exp Z\in\AC$ for any $Z\in\fn_{i, \alpha}^{j-1}$ and let $X\in\fn_{i, \alpha}^j$. 
Since $\AC$ is an open neighborhood of $e\in G$ there exists $m\in\N$ such that for $v=\frac{X}{m}$ we have that $\exp v\in \AC$. Let then $\tau'>0$ such that $\exp v\in\AC_{\tau'}$. Also, since $\mathrm{Re}(\alpha)=0$, there exists $\tau''>0$ such that $\rme^{\tau''\alpha}=1$. Since $\rme^{n\tau''\alpha}=1$ for any $n\in\N$ let us consider $n_0\in\N$ such that $\tau=n_0\tau''\geq \tau'$. Then $\rme^{\tau\alpha}=1$ and $\exp v\in\AC_{\tau}$ by item 1. of Proposition \ref{reachable}.

Since $v\in \fn_{i,\alpha}^j$ we have for any $k\in\N$ that
$$\rme^{k\tau\DC}v=\rme^{k\tau(\DC-\alpha)}v=v+k\tau(\DC-\alpha)v+\cdots+\frac{(k\tau(\DC-\alpha))^{j-1}}{(j-1)!}v$$
 and consequently that 
\begin{equation}
\label{exponential}
v=\rme^{k\tau\DC}v+z_k
\end{equation}
where 
$$-z_k=k\tau(\DC-\alpha)v+\cdots+\frac{(k\tau(\DC-\alpha))^{j-1}}{(j-1)!}v\in \fn_{i, \alpha}^{j-1}.$$ 
Since $\fn$ is nilpotent we have by {\bf BCH} that
$$\exp(\rme^{\tau \DC}v)\exp(\rme^{2\tau \DC}v)=\exp(\rme^{\tau \DC}v+\rme^{2\tau \DC}v+O_1)$$
where $O_1$ is given by brackets of $\rme^{\tau \DC}v$ and $\rme^{2\tau \DC}v$. Since $[\fn^j_{i, \alpha}, \fn^j_{i, \alpha}]\subset [\fn, \fn_i]=\fn_{i+1}$ we have that $O_1\in\fn_{i+1}$ which by Lemma \ref{salvador} implies that 
$$\exp(\rme^{\tau \DC}v+\rme^{2\tau \DC}v+O_1)=\exp(\rme^{\tau \DC}v+\rme^{2\tau \DC}v)g_1$$
with $g_1\in N_{i+1}$. Since by Proposition \ref{reachable} item 3. we have that 
$$\exp(\rme^{\tau \DC}v)\exp(\rme^{2\tau \DC}v)=\varphi_{\tau}(\exp v)\varphi_{2\tau}(\exp v)\in\AC_{3\tau}\subset\AC$$
and $N_{i+1}$ is $\varphi$-invariant and is contained in $\AC$ by hypothesis, Lemma \ref{pointinvariance} implies that 
$$\exp(\rme^{\tau\DC}v+\rme^{2\tau \DC}v)\in\AC.$$

Using item 1. \hspace{-.2cm} of Proposition \ref{reachable} we can choose then $k_3\in\N$ such that
$$\exp(\rme^{\tau\DC}v+\rme^{2\tau \DC}v)\in\AC_{k_3\tau}.$$ Again by {\bf BCH}, we have that
$$\exp(\rme^{\tau \DC}v+\rme^{2\tau \DC}v)\exp(\rme^{k_3\tau \DC}v)=\exp(\rme^{\tau \DC}v+\rme^{2\tau \DC}v+\rme^{k_3\tau \DC}v+O_2)$$
where $O_2$ is given by brackets of $\rme^{\tau \DC}v+\rme^{2\tau \DC}v$ and $\rme^{k_3\tau \DC}v$ which implies as before that $O_2\in \fn_{i+1}$ and again by Lemma \ref{salvador} that 
$$\exp(\rme^{\tau \DC}v+\rme^{2\tau \DC}v+\rme^{k_3\tau \DC}v+O_2)=\exp(\rme^{\tau \DC}v+\rme^{2\tau \DC}v+\rme^{k_3\tau \DC}v)g_2$$
with $g_2\in N_{i+1}$. Since $\exp(\rme^{k_3\tau \DC}v)=\varphi_{k_3\tau}(\exp v)$ we have that 
$$\exp(\rme^{\tau \DC}v+\rme^{2\tau \DC}v)\exp(\rme^{k_3\tau \DC}v)\in\AC_{(k_3+1)\tau}\subset\AC.$$
and again by the $\varphi$-invariance of $N_{i+1}$ and the hypothesis that it is contained in $\AC$ we have by Lemma \ref{pointinvariance} that 
$$\exp(\rme^{\tau \DC}v+\rme^{2\tau \DC}v+\rme^{k_3\tau \DC}v)\in\AC.$$

If we repeat the same process as above $m-2$ times we get then $k_3, k_4,\ldots, k_m$ such that
$$\exp\left(\sum_{i=1}^m\rme^{k_i\tau \DC}v\right)\in\AC$$
with $k_1=1$ and $k_2=2$. Using (\ref{exponential}) and the fact that $v=\frac{X}{m}$ we have by summing up to $m$ that  
$$X=\sum_{i=1}^m\rme^{k_i\tau D}v +z$$
with $z\in\fn_{i, \alpha}^{j-1}$ and $k_i\in\N$ as above. Again by {\bf BCH}, we have that
$$\exp\left(\sum_{i=1}^m\rme^{k_i\tau \DC}v\right)\exp z=\exp (X+O)$$
where $O$ is given by brackets of $\sum_{i=1}^m\rme^{k_i\tau \DC}v$ and $z$ which as before implies that $O\in\fn_{i+1}$. By the $\DC$-invariance of $\fn_{i, \alpha}^{j-1}$ it holds that $\rme^{t\DC}z\in\fn_{i, \alpha}^{j-1}$ for any $t\in\R$ which by the inductive hypothesys implies that
$$\varphi_t(\exp z)=\exp(\rme^{t\DC}z)\in\AC,\;\;\mbox{ for any }\;\;t\in\R$$
and by Lemma \ref{pointinvariance} that 
$$\exp (X+O)=\exp\left(\sum_{i=1}^m\rme^{k_i\tau \DC}v\right)\exp z\in\AC.$$
Using now Lemma \ref{salvador} we have that 
$$\exp(X+O)=(\exp X)g, \;\;\;\mbox{ for some }\;\;\;g\in N_{i+1}$$
and again by the $\varphi$-invariance of $N_{i+1}$ and the hypothesis that $N_{i+1}\subset\AC$ we get that 
$$\exp X=\exp(X+O)g^{-1}\in\AC$$
which gives us the desired result.

\item[Step 3:] $N\subset \AC$. 

Since $N_{l+1}=\{e\}\subset\AC$ we just have to apply the above $l$-times to get that $N=N_1\subset\AC$ concluding the lemma. 
\end{itemize}
\end{proof}

The above together with Proposition \ref{nilradical} give us that any $\varphi$-invariant solvable connected Lie subgroup of $G^0$ is contained in $\AC$ as stated in the next result.

\begin{proposition}
\label{solvablecase}
If $K^0$ is a $\varphi$-invariant solvable connected Lie subgroup of $G^0$ it holds that $K^0\subset\AC$. In particular if $G^0$ is solvable we have that $G^0\subset\AC$.
\end{proposition}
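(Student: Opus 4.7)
The plan is to bootstrap from the nilpotent case (Lemma \ref{generalnilpotentcase}) to the solvable case via Proposition \ref{nilradical}, using the nilradical of $\fk^0$ as the intermediate ideal. The crucial auxiliary fact is that, in characteristic zero, every derivation of a solvable Lie algebra has image contained in its nilradical.

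Write $\fk^0$ for the Lie algebra of $K^0$ and let $\fn \subset \fk^0$ be its nilradical. Since $K^0$ is connected and $\varphi$-invariant, $\fk^0$ is $\DC$-invariant, so $\DC|_{\fk^0}$ is a derivation of the solvable algebra $\fk^0$. To establish $\DC(\fk^0) \subset \fn$, I would form the semidirect-product Lie algebra $\tilde{\fk} := \fk^0 \rtimes \R\DC$ with bracket $[X + a\DC, Y + b\DC] = [X, Y] + a\DC(Y) - b\DC(X)$. Then $\tilde{\fk}$ is an abelian extension of a solvable algebra, hence solvable, so by the standard corollary of Lie's theorem its derived subalgebra $[\tilde{\fk}, \tilde{\fk}]$ is nilpotent. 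Since $\DC(X) = [\DC, X]$ for $X \in \fk^0$ and $\fk^0$ is $\DC$-invariant, $\DC(\fk^0) \subset [\tilde{\fk}, \tilde{\fk}] \cap \fk^0$. This intersection is an ideal of $\fk^0$ (both terms are ideals of $\tilde{\fk}$) and a subalgebra of the nilpotent algebra $[\tilde{\fk}, \tilde{\fk}]$, hence a nilpotent ideal of $\fk^0$ and so contained in $\fn$.

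Let $N$ denote the connected subgroup of $G$ with Lie algebra $\fn$. Since the nilradical is a characteristic ideal, $\fn$ is $\DC$-invariant and $N$ is a $\varphi$-invariant connected nilpotent subgroup of $G$; the inclusions $\fn \subset \fk^0 \subset \fg^0$ give $N \subset G^0$. Lemma \ref{generalnilpotentcase} then yields $N \subset \AC$. With $\DC(\fk^0) \subset \fn$ in hand, Proposition \ref{nilradical} applied with $\fh = \fk^0$ and the ideal $\fn$ produces $K^0 \subset \AC$, proving the first claim; the ``in particular'' statement is the specialization $K^0 = G^0$.

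The main obstacle is the bridging identity $\DC(\fk^0) \subset \fn$, which is classical but is not recorded explicitly in the preceding material of the paper; the semidirect-product argument above provides a short self-contained justification. Once this identity is available, the conclusion follows immediately from the two preceding results.
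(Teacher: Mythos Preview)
Your proof is correct and follows essentially the same route as the paper: take the nilradical $\fn$ of $\fk^0$, apply Lemma~\ref{generalnilpotentcase} to the connected subgroup $N$, and then invoke Proposition~\ref{nilradical} via $\DC(\fk^0)\subset\fn$. The only difference is that the paper simply asserts $\DC(\fk^0)\subset\fn$ as a known fact about derivations of solvable Lie algebras, whereas you supply a self-contained proof of this inclusion through the semidirect-product construction $\fk^0\rtimes\R\DC$; your version is thus slightly more complete but not a different approach.
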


\begin{proof}
Let $\fn^0$ to be the nilradical of $\fk^0$. If $N^0$ stands for its connected subgroup the above lemma implies that $N^0\subset\AC$. Since $K^0$ is connected we have that its Lie algebra $\fk^0$ is $\DC$-invariant and since it is solvable it holds that $\DC(\fk^0)\subset\fn^0$. By Proposition \ref{nilradical} we have then that $K^0\subset\AC$ which concludes the proof.
\end{proof}

We can now prove the statement made at the beginning of this section.

\begin{theorem}
\label{positivereachable}
Let $G$ to be a solvable connected Lie group and consider the linear system \ref{linearsystem} on $G$. If $\AC$ is open then $G^{+, 0}\subset\AC$.
\end{theorem}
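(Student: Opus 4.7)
The overall strategy is to use the product decomposition $G^{+,0}=G^+G^0$ from Proposition \ref{subgroups} and to prove the two factors $G^0$ and $G^+$ lie in $\AC$ separately, then glue the pieces back together with an invariance lemma.

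The inclusion $G^0\subset\AC$ is a direct application of Proposition \ref{solvablecase}: the connected subgroup $G^0$ is $\varphi$-invariant by construction, and it is solvable because $G$ is solvable, so the hypothesis of that proposition is met with $K^0=G^0$.

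For $G^+\subset\AC$ I plan to invoke Corollary \ref{invarianceprop}. Since $G^+$ is connected and $\varphi$-invariant, this reduces the task to showing that $\exp X\in\AC$ for every $X\in\fg^+$. Fix such an $X$. Because $\DC|_{\fg^+}$ has only eigenvalues with positive real part, Remark \ref{exponentiallystable} (applied to $\fg^+$ in negative time) gives $\rme^{-t\DC}X\to 0$ as $t\to+\infty$, and hence $\exp(\rme^{-t\DC}X)\to e$. Using the hypothesis that $\AC$ is an open neighborhood of $e$, choose $t$ large enough that $\exp(\rme^{-t\DC}X)\in\AC$. Proposition \ref{derivativeonorigin} then yields
\[
\varphi_t(\exp(\rme^{-t\DC}X))=\exp(\rme^{t\DC}\rme^{-t\DC}X)=\exp X,
\]
and the remark following Proposition \ref{reachable} gives the forward invariance $\varphi_t(\AC)\subset\AC$ for $t\geq 0$. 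Together these show $\exp X\in\AC$, and Corollary \ref{invarianceprop} delivers $G^+\subset\AC$.

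To combine the two inclusions, let $g\in G^{+,0}$ and write $g=g_+g_0$ with $g_+\in G^+\subset\AC$ and $g_0\in G^0\subset\AC$ using Proposition \ref{subgroups}. Since $G^0$ is $\varphi$-invariant, $\varphi_t(g_0)\in G^0\subset\AC$ for every $t\in\R$, so Lemma \ref{pointinvariance} applied at the point $g_0$ yields $\AC\cdot g_0\subset\AC$; in particular $g=g_+g_0\in\AC$. I do not anticipate any real obstacle: the machinery of Sections 2 and 3 is tailored exactly to this decomposition, and the only mildly delicate step is the $G^+$ part, where the contraction of the backward flow on $\fg^+$ plays against the forward $\varphi$-invariance of $\AC$ to transport an element already in $\AC$ near $e$ to the prescribed $\exp X$ by a long positive-time application of $\varphi$.
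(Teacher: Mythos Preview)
Your proof is correct and follows essentially the same route as the paper: use Proposition~\ref{solvablecase} for $G^0\subset\AC$, use the backward contraction on $\fg^+$ combined with forward $\varphi$-invariance of $\AC$ for $G^+\subset\AC$, and then glue via Lemma~\ref{pointinvariance}. The only cosmetic difference is that the paper handles $G^+$ by invoking surjectivity of $\exp$ on the nilpotent group $G^+$ (so every $g\in G^+$ is already some $\exp X$), whereas you route through Corollary~\ref{invarianceprop}; both arrive at the same conclusion by the same mechanism.
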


\begin{proof}
Let $g\in G^+$. Since $G^+$ is nilpotent, there exists $X\in\fg^+$ such that $g=\exp X$. Since $0\in\fg^+$ is exponentially stable for the flow of $\rme^{t\DC}|_{\fg^+}$ in negative time, we have that $\rme^{-t\DC}X$ can be made arbitrarily close to $0\in\fg^+$ for $t>0$ large enough. By continuity we have that 
$$\varphi_{-t}(g)=\varphi_{-t}(\exp X)=\exp(\rme^{-t\DC}X)\in\AC$$
for $t>0$ large enough and consequently $g\in\varphi_t(\AC)\subset\AC$ showing that $G^+\subset\AC$. Since $G$ is solvable $G^0$ is also solvable which by Proposition \ref{solvablecase} implies that $G^0\subset\AC$. By Lemma \ref{pointinvariance} the product $G^{+, 0}=G^+G^0$ is then contained in $\AC$ concluding the proof. 
\end{proof}

\begin{remark}
It is important to notice that neither in Lemma \ref{generalnilpotentcase} nor in Proposition \ref{solvablecase} we assumed that the whole subgroup $G^0$ was solvable.
\end{remark}

\begin{remark}
The linear system (\ref{linearsystem}) is said to satisfy the {\bf ad-rank} condition if the vector subspace
\begin{equation}
\label{ad-rank}
\mathrm{span}\{\DC^j(X_i(e)); \;i=1, \ldots, m, j\in\N_0\}
\end{equation}
coincides with $\fg$. If the system satisfies (\ref{ad-rank}) then by Theorem 3.5 of \cite{VAJT} we have that $e\in\inner\AC_{\tau}$ for any $\tau>0$ which by Proposition \ref{reachable} implies, in particular, that $\AC$ is open. 

Then we have an algebraic method that implies the openness of reachable set $\AC$. Although for Euclidean system the ad-rank condition is equivalent to the set $\AC$ being open that is not true for any Lie group (see example in the last section).
\end{remark}

\section{Controllability of linear systems}

The results in the previous section will allow us to give sufficient conditions for the controllability of linear systems on solvable connected Lie groups. Let us assume from now on then that $G$ is a solvable connected Lie group.

Let $\XC$ to be a linear vector field on $G$ and $\DC$ its associated derivation. If we consider the linear vector field $\XC^*$ on $G$ whose flow is given by $\varphi_{\tau}^*:=\varphi_{-\tau}$ it is straightforward to see that the derivation $\DC^*$ associated with $\XC^*$ satisfies $\DC^*=-\DC$. The Lie subalgebras and Lie subgroups induced by the derivation $\DC^*$ are related with the ones induced by $\DC$ as
$$\fg_*^+=\fg^-, \;\;\;\;\;\fg^-_*=\fg^+,\;\;\; \mbox{ and }\;\;\;\fg^0_*=\fg^0$$
and
$$G_*^+=G^-, \;\;\;\;\;G^-_*=G^+,\;\;\; \mbox{ and }\;\;\;G^0_*=G^0.$$
Also, if we consider linear system (\ref{linearsystem}) with drifts $\XC^*$, $\XC$ and same right invariant vector fields, their respectives solutions are related by $\phi^*_{t, u}(g)=\phi_{-t, u}(g)$ which implies that $\AC^*_{\tau}=\varphi_{-\tau}((\AC_{\tau})^{-1})$. 

The above together with Theorem \ref{positivereachable} give us that if $\AC^*$ is open then $G^{-, 0}\subset\AC^*$. We have then the main result of the paper:

\begin{theorem}
\label{control1}
The linear system (\ref{linearsystem}) is controllable if $\AC$ and $\AC^*$ are open and the derivation associated with $\XC$ has only eigenvalues with zero real part.
\end{theorem}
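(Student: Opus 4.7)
The plan is to apply Theorem~\ref{positivereachable} to both $\XC$ and its time-reversal $\XC^*$, obtain $\AC=\AC^*=G$, and then splice trajectories using the concatenation property recorded after the cocycle identity.

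First I would exploit the eigenvalue hypothesis: since every eigenvalue of $\DC$ has zero real part, $\fg^+=\fg^-=\{0\}$ and $\fg=\fg^0$, so $G^{+,0}=G^{-,0}=G$. The same is true for the derivation $\DC^*=-\DC$ of the reversed system, so in that system's decomposition $G_*^{+,0}=G^{-,0}=G$ as well. Because $\AC$ is open, Theorem~\ref{positivereachable} gives $G=G^{+,0}\subset\AC$, hence $\AC=G$, meaning every $h\in G$ is reachable from $e$. Applying the same theorem to the reversed system, whose reachable set $\AC^*$ is open by hypothesis, gives $G=G_*^{+,0}\subset\AC^*$, so $\AC^*=G$.

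Next I would translate $\AC^*=G$ into a statement about the original system. Given $g\in G$, pick $\tau>0$ with $g\in\AC_\tau^*$; the identity $\AC_\tau^*=\varphi_{-\tau}(\AC_\tau^{-1})$ recalled just before the theorem then yields $\varphi_\tau(g)^{-1}\in\AC_\tau$, so there is $u\in\UC$ with $\phi_{\tau,u}=\varphi_\tau(g)^{-1}$, and the formula $\phi_{\tau,u}(g)=\phi_{\tau,u}\,\varphi_\tau(g)$ gives $\phi_{\tau,u}(g)=e$. Hence $e\in\AC(g)$ for every $g\in G$.

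Finally, to conclude controllability, fix arbitrary $g,h\in G$. By the previous step there exist $\tau_1>0$ and $u_1\in\UC$ with $\phi_{\tau_1,u_1}(g)=e$, and by $\AC=G$ there exist $\tau_2>0$ and $u_2\in\UC$ with $\phi_{\tau_2,u_2}(e)=h$. Concatenating $u_1$ and $u_2$ as described after the cocycle property produces $u\in\UC$ with $\phi_{\tau_1+\tau_2,u}(g)=h$, so $h\in\AC(g)$; thus $\AC(g)=G$ for every $g\in G$, which is controllability. I do not anticipate a genuine obstacle: the substantive content is packed into Theorem~\ref{positivereachable} and into the reversibility identity $\AC_\tau^*=\varphi_{-\tau}(\AC_\tau^{-1})$, and the remaining argument is just definition-chasing together with the cocycle property.
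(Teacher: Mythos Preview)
Your proof is correct and follows essentially the same approach as the paper: apply Theorem~\ref{positivereachable} to both $\XC$ and $\XC^*$, use the eigenvalue hypothesis to conclude $G=G^0=\AC=\AC^*$, and then invoke the time-reversal identity together with the cocycle/concatenation property to pass from $\AC^*=G$ to $e\in\AC(g)$ and controllability. The only differences are cosmetic: the paper uses the relation $\phi^*_{\tau,u}=\phi_{-\tau,u}=\phi_{\tau,\Theta_{-\tau}u}^{-1}$ directly rather than going through $\AC_\tau^*=\varphi_{-\tau}(\AC_\tau^{-1})$, and it stops at $e\in\AC(g)$ while you spell out the final concatenation step explicitly.
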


\begin{proof}
By Theorem \ref{positivereachable} and the above discussion we have that $G^{+, 0}\subset \AC$ and $G^{-, 0}\subset \AC^*$. Since $\DC$ has only eigenvalues with zero real part, we have that $\fg=\fg^0$ which implies that $G=G^0$ and consequently that $G=\AC\cap\AC^*$. Let then $g\in G$ arbitrary. There is $\tau>0, u\in\UC$ such that $g=\phi^*_{\tau, u}=\phi_{-\tau, u}$. Since $\phi_{-\tau, u}=\phi_{\tau, \Theta_{-\tau}u}^{-1}$ we have that $\phi_{\tau, \Theta_{-\tau}u}(g)=e$ which implies that $e\in\AC(g)$ and show that the system (\ref{linearsystem}) is controllable.
\end{proof}

\begin{remark}
If the linear system is bounded, Lemma 4.5.2 of \cite{FCWK} implies that there exists $\tau_0>0$ such that $e\in\inner\AC_{\tau_0}$ and consequently that $\AC$ is open if, and only if $\AC^*$ is open. Moreover, if the system satisfies the ad-rank condition then we also have that $\AC$ and $\AC^*$ are open.
\end{remark}

\subsection{Bounded linear systems} 

From now on we will assume that the connected Lie group $G$ is nilpotent and that the linear system (\ref{linearsystem}) on $G$ is bounded. With such assumptions we will show that the conditions in Theorem \ref{control1} are also necessary. 

In order to prove the above we will need the following lemma:

\begin{lemma}
\label{normal}
Let us assume that $\AC$ is open and that $G=G^{-, 0}$. If the homogeneous space $M=G/G^0$ admits a $G$-invariant Riemannian metric then $\AC_{G^-}=\AC\cap G^-$ is a relatively compact set.
\end{lemma}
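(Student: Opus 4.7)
The strategy is to exploit the strong structural constraint that the hypothesis imposes on $G$. Since $G$ is nilpotent, $\Ad(g)$ is a unipotent linear map on $\fg$ for every $g \in G^0$. Restricted to $\fg/\fg^0 \cong T_{\pi(e)}M$, the map $\Ad(g)$ descends well, and the $G$-invariant metric provides an $\Ad(G^0)$-invariant inner product on this quotient; hence each $\Ad(g)|_{\fg/\fg^0}$ is simultaneously orthogonal and unipotent, forcing it to be the identity. This gives $[\fg^0,\fg]\subset\fg^0$, and combined with $[\fg^0,\fg^-]\subset\fg^-$ from Proposition \ref{der1} we obtain $[\fg^0,\fg^-]=\{0\}$. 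Together with $G=G^0G^-$ and $G^0\cap G^-=\{e\}$ from Proposition \ref{subgroups}, this shows that $G$ is the internal direct product $G^0\times G^-$.

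Under this decomposition, the drift $\XC$ and each right-invariant control vector field $X_j$ split as sums of corresponding vector fields on $G^0$ and $G^-$ (using $\fg=\fg^0\oplus\fg^-$ together with the $\DC$-invariance of both summands), so system (\ref{linearsystem}) decouples into two independent bounded linear systems sharing the same controls. Writing $\phi_{t,u}(e)=(\phi^0_{t,u},\phi^-_{t,u})$, every element of $\AC\cap G^-$ is of the form $(e,\phi^-_{t,u})$ for some $(t,u)$ satisfying $\phi^0_{t,u}=e$; thus, via the direct product identification, $\AC\cap G^-\subset\{e\}\times\AC^{G^-}$, where $\AC^{G^-}$ denotes the reachable set from the identity of the $G^-$-subsystem. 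It therefore suffices to show that $\AC^{G^-}$ is relatively compact in $G^-$.

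The $G^-$-subsystem is a bounded linear system on a connected nilpotent Lie group whose drift derivation $\DC|_{\fg^-}$ has all eigenvalues of negative real part. I would prove boundedness of $\AC^{G^-}$ by induction on the nilpotency class $c$ of $G^-$. In exponential coordinates on $G^-$ the system reads $\dot Y=\DC|_{\fg^-}Y+\sum_{j=1}^m u_j(t)W_j(Y)$ with $W_j$ polynomial vector fields of degree at most $c-1$ (finiteness of BCH in the nilpotent setting). The base case $c=1$ is the classical fact that a stable linear ODE with bounded forcing has uniformly bounded solutions starting at $0$. For $c>1$, let $\fz$ denote the last nontrivial term of the lower central series of $\fg^-$ (which is central and $\DC$-invariant); projecting onto $G^-/\exp\fz$ and applying the inductive hypothesis yields boundedness of the projected trajectories. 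In the complementary central coordinate $Z\in\fz$ the equation has stable drift $\DC|_{\fz}$ and a forcing term polynomial in the already-bounded lower-level coordinates and the bounded controls, whence Gronwall's inequality gives a uniform bound on $Z$.

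The main technical obstacle is the inductive step: carefully tracking the polynomial BCH couplings through the lower central series to verify that the forcing on the central component really is a polynomial in the previously-bounded coordinates, so that Gronwall propagates boundedness up the series. Granted this, $\AC^{G^-}$ is bounded and hence relatively compact in $G^-$, which in turn gives the desired relative compactness of $\AC_{G^-}$.
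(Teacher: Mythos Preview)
Your approach is correct and genuinely different from the paper's.

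You first exploit nilpotency together with the invariant-metric hypothesis to force the internal direct product $G\cong G^0\times G^-$: the isotropy representation $\Ad(G^0)|_{\fg/\fg^0}$ is simultaneously unipotent (nilpotency of $G$) and orthogonal (the $G$-invariant metric), hence trivial, giving $[\fg^0,\fg^-]=0$. You then decouple the system and reduce to bounding the reachable set of a bounded linear system on $G^-$ whose drift derivation has spectrum in the open left half-plane. Your inductive scheme through the lower central series is sound: the point you flag as the obstacle really does go through, because (i) a $\DC$-invariant complement $\fm$ to $\fz$ in $\fg^-$ exists, so the drift splits cleanly, and (ii) the pull-back $W_j(Y)=\frac{\ad Y}{e^{\ad Y}-1}X_j^-$ depends only on $Y\bmod\fz$ since $\fz$ is central, so the forcing on the $\fz$-component is a polynomial in the already-bounded quotient coordinates and the bounded controls; variation of parameters against the stable matrix $\DC|_{\fz}$ finishes it. (A small remark: $G^-$ is indeed simply connected here, since its maximal compact subgroup is characteristic, hence $\varphi$-invariant, hence contained in $G^0\cap G^-=\{e\}$ by Proposition~\ref{compact}.)

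The paper, by contrast, never decomposes $G$ and never invokes nilpotency inside this lemma. It works directly on $M=G/G^0$: the $G$-invariant metric makes each left translation $\mu_g$ an isometry, and since $(d\Psi_t)_o=\rme^{t\DC|_{\fg^-}}$ the induced flow $\Psi_t$ contracts distances by $c^{-1}\rme^{-\lambda t}$. Combining this with the cocycle identity $\bar\phi_{t+s,u}=\mu_{\phi_{t,\Theta_su}}\bigl(\Psi_t(\bar\phi_{s,u})\bigr)$ and compactness of $\UC$ yields a uniform geometric-series bound $\varrho(\bar\phi_{nt_0,u},o)\le c^{-1}\sum_{i=0}^{n-1}\rme^{-i\lambda t_0}$, so $\pi(\AC)$ lies in a fixed ball; finally $\pi|_{G^-}$ is a homeomorphism onto $M$.

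What each buys: the paper's argument is shorter, avoids the inductive bookkeeping, and actually proves the lemma from its stated hypotheses alone---nilpotency is used only \emph{afterwards}, in Proposition~\ref{reciprocanilpotent1}, to manufacture the invariant metric. Your argument extracts the stronger structural fact $G\cong G^0\times G^-$, which is interesting in its own right, and isolates Step~6 as an independent statement about contracting bounded linear systems on simply connected nilpotent groups. Note, incidentally, that once you have the direct product you could replace your induction by the paper's contraction-plus-cocycle estimate applied directly on $G^-$ (using the flat metric pulled back from $\fg^-$ via $\exp$), which would dispatch Step~6 in one stroke.
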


\begin{proof}
By the $\varphi$-invariance of $G^0$ we have a well induced flow $\Psi_t$ on $M$ that satisfies $\Psi_t\circ\pi=\pi\circ\varphi_t$ for any $t\in\R$, where $\pi:G\rightarrow M$ is the canonical projection. Let us denote by $\varrho$ the distance on $M$ induced by the $G$-invariant Riemannian metric on $M$. Then, for any smooth curve $\gamma:[0, 1]\rightarrow M$ joining two given points $x, y\in M$, we have that	 $\alpha(s):=\Psi_t(\gamma(s))$ is a smooth curve joining $\Psi_t(x)$ with $\Psi_t(y)$ and consequently  
$$\varrho(\Psi_t(x), \Psi_t(y))\leq\int_0^1|\alpha'(s)|ds=\int_0^1|(d\Psi_t)_{\gamma(s)}\gamma'(s)|ds.$$
Also by the $G$-invariance of the metric we have that $\mu_g:M\rightarrow M$ given by $\mu_g(x)=gx$ is an isometry and since $\Psi_t\circ\mu_g=\mu_{\varphi_t(g)}\circ\Psi_t$ we have that $||(d\Psi_t)_x||=||(d\Psi_t)_o||$ for any $x\in M$. By Proposition 3.5 of \cite{DaSilva} we have that $(d\Psi_t)_o=\rme^{t\DC|_{\fg^-}}$ which together with Remark \ref{exponentiallystable} gives us that 
$$\varrho(\Psi_t(x), \Psi_t(y))\leq c^{-1}\rme^{-\lambda t}\int_0^1|\gamma'(s)|ds, \;\;\mbox{ for any }\;\;t>0$$
and consequently, that
\begin{equation}
\label{metric}
\varrho(\Psi_t(x), \Psi_t(y))\leq c^{-1}\rme^{-\lambda t}\varrho(x, y)
\end{equation}
for any $t>0$ and $x, y\in M$. Consider then $\pi(\AC)\subset M$ and for any $t>0$ and $u\in\UC$ let us consider $\bar{\phi}_{t, u}:=\pi(\phi_{t, u})$. Since $\phi_{t+s, u}=\phi_{t, \Theta_su}\varphi_t(\phi_{s, u})$ we have that 
\begin{equation}
\label{cocycle}
\bar{\phi}_{t+s, u}=\mu_{\phi_{t, \Theta_su}}(\Psi_t(\bar{\phi}_{s, u})).
\end{equation}

By the compacity of $\UC$ and the continuity of $\bar{\phi}_{t, u}$ there exists $t_0>0$ such that $\bar{\phi}_{t, u}\in B_1(o)$ for any $t\in[0, t_0]$ and any $u\in\UC$. Let us assume w.l.o.g. that $c^{-1}\geq 1$. We will show by induction on $n\in\N$ that, for any $u\in\UC$, it holds that
$$\varrho(\bar{\phi}_{nt_0, u}, o)\leq c^{-1}\sum_{i=0}^{n-1}\rme^{-i\lambda t_0}.$$
By our assumptions we already have the case $n=1$. Let us assume then the result valid for $n\geq 1$ and let us show it for $n+1$. By equation (\ref{cocycle}), we have that  
$$\bar{\phi}_{(n+1)t_0, u}=\bar{\phi}_{nt_0+t_0, u}=\mu_{\phi_{nt_0, \Theta_{t_0}u}}( \Psi_{nt_0}(\bar{\phi}_{t_0, u}))$$
which gives us
$$\varrho(\bar{\phi}_{(n+1)t_0, u}, o)\leq \varrho(\mu_{\phi_{nt_0, \Theta_{t_0}u}}( \Psi_{nt_0}(\bar{\phi}_{t_0, u})), \mu_{\phi_{nt_0, \Theta_{t_0}u}}(o))$$
$$+\varrho(\mu_{\phi_{nt_0, \Theta_{t_0}u}}(o), o)=\varrho(\Psi_{nt_0}(\bar{\phi}_{t_0, u}), o)+\varrho(\bar{\phi}_{nt_0, \Theta_{t_0}u}, o)$$
where for the equality we used that $\mu_g$ is an isometry and that $\mu_g(o)=\pi(g)$ for any $g\in G$. By the inductive hypothesis and equation (\ref{metric}) we have that 
$$\hspace{-5cm}\varrho(\bar{\phi}_{nt_0, \Theta_{t_0}u}, o) \leq c^{-1}\sum_{i=0}^{n-1}\rme^{-i\lambda t_0}$$
$$\hspace{1cm}\mbox{and }\;\;\;\; \varrho(\Psi_{nt_0}(\bar{\phi}_{t_0, u}), o)\leq c^{-1}\rme^{-n\lambda t_0}\varrho(\bar{\phi}_{t_0, u}, o) \leq c^{-1}\rme^{-n\lambda t_0}$$
and consequently that
$$\varrho(\bar{\phi}_{(n+1)t_0, u}, o)\leq c^{-1}\sum_{i=0}^{n}\rme^{-i\lambda t_0}.$$
Since $\rme^{-\lambda t_0}<1$ we have that $r:=c^{-1}\sum_{i=0}^{\infty}\rme^{-i\lambda t_0}<\infty$ and by the above that 
$$\pi(\AC)=\pi\left(\bigcup_{n\in\N}\AC_{n\tau_0}\right)=\bigcup_{n\in\N}\pi(\AC_{n\tau_0}) \subset \bar{B}_r(o)$$
showing that $\pi(\AC)$ is a relatively compact set in $M$.
Since $G^{-, 0}=G^-G^0$ and $G^-\cap G^0=\{e\}$ we have that $\pi|_{G^-}$ is a homeomorphism between $G^-$ and $M$ which implies that $\AC_{G^-}$ is a relatively compact set in $G^-$ and since $G^-$ is closed it is a relatively compact set of $G$.
\end{proof}

We have then the following result.

\begin{proposition}
\label{reciprocanilpotent1}
Let us assume that $\AC$ is open. Then $\AC=G$ if, and only if, $G=G^{+, 0}$.
\end{proposition}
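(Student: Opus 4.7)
The $(\Leftarrow)$ direction is an immediate consequence of Theorem \ref{positivereachable}: if $G=G^{+,0}$, then $G=G^{+,0}\subset\AC\subset G$, so $\AC=G$.

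For the $(\Rightarrow)$ direction I plan to argue by contrapositive. Since $G$ is nilpotent hence solvable, Proposition \ref{subgroups}(5) gives $G=G^{+,0}G^-$, so $G=G^{+,0}$ is equivalent to $G^-=\{e\}$, and the task reduces to showing that $\AC=G$ forces $G^-=\{e\}$. The main tool is Lemma \ref{normal}: it yields that $\AC\cap G^-$ is relatively compact in $G$. Under the hypothesis $\AC=G$ this intersection equals $G^-$, so $G^-$ is relatively compact; being closed in $G$ by Proposition \ref{subgroups}(4), $G^-$ is then compact. Since $G^-$ is also connected and $\varphi$-invariant, Proposition \ref{compact} places it inside $G^0$, and combined with $G^-\cap G^0=\{e\}$ from Proposition \ref{subgroups}(2) this forces $G^-=\{e\}$, contradicting the assumption that $G\neq G^{+,0}$.

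The main obstacle is arranging the hypotheses of Lemma \ref{normal}, namely $G=G^{-,0}$ together with the existence of a $G$-invariant Riemannian metric on $M=G/G^0$, neither of which is automatic. My plan to overcome this is a reduction: since $G$ is nilpotent, each term of the lower central series of $\fg$ is a $\DC$-invariant ideal and therefore corresponds to a $\varphi$-invariant closed normal subgroup of $G$. By iteratively quotienting out such normal subgroups contained in $G^+$, I expect to arrive at a nilpotent quotient $\bar G$ with an induced linear system for which $\bar G=\bar G^{-,0}$ and for which the $G$-invariant metric on $\bar G/\bar G^0$ exists (the isotropy action of $\bar G^0$ being trivial on the relevant subquotient because $\mathrm{Ad}$ is unipotent in the nilpotent setting). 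The openness of $\AC$ and the equality $\AC=G$ descend along the projection $\pi\colon G\to\bar G$, and the resulting compactness of $\bar G^-$ lifts back to $G^-$ because $\pi|_{G^-}$ is a homeomorphism onto $\bar G^-$ by the disjointness in Proposition \ref{subgroups}(2). Carrying out this reduction carefully is the delicate step of the argument.
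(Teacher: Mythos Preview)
Your $(\Leftarrow)$ argument and the final stretch of your $(\Rightarrow)$ argument --- apply Lemma~\ref{normal} to get $G^-$ compact, then use Proposition~\ref{compact} together with $G^-\cap G^0=\{e\}$ to force $G^-=\{e\}$ --- match the paper exactly. The gap is entirely in the reduction you sketch in order to meet the hypotheses of Lemma~\ref{normal}.

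Two concrete problems. First, the terms of the lower central series of $\fg$ are $\DC$-invariant ideals of $\fg$, but they are \emph{not} contained in $\fg^+$; each $\fg_i$ decomposes as $\fg_i\cap\fg^+\,\oplus\,\fg_i\cap\fg^0\,\oplus\,\fg_i\cap\fg^-$. So ``quotienting out normal subgroups contained in $G^+$'' has no obvious supply of such subgroups: $\fg^+$ itself need not be an ideal of $\fg$, because $[\fg^-,\fg^+]$ can land in $\fg^0$ or $\fg^-$. You therefore cannot, from nilpotency alone, peel off $G^+$ and reach a quotient with $\bar G=\bar G^{-,0}$. Second, your justification for the $G$-invariant metric on $\bar G/\bar G^0$ --- that $\Ad$ is unipotent in the nilpotent setting --- does not work. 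What is needed is that the isotropy $\Ad(\bar G^0)|_{\bar\fg^-}$ have compact closure, and a group of unipotent operators is compact only when it is trivial. So you still need $[\bar\fg^0,\bar\fg^-]=0$, which is exactly what is not yet established.

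The paper's route resolves both issues simultaneously by induction on $\dim G$, quotienting first by the centre $Z(G)$ (nontrivial since $G$ is nilpotent). The induction hypothesis applied to $H=G/Z(G)$ gives $H=H^{+,0}$, hence $H^-=\{e\}$; by Lemma~\ref{projection} this forces $G^-\subset Z(G)$. That single conclusion (a) makes $\fg^+$ an ideal of $\fg$ (since now $[\fg^-,\fg^+]=0$), so one can legitimately pass to $G/G^+\cong G^{-,0}$ with an induced linear system, and (b) makes $\Ad(G^0)|_{\fg^-}=\{\id_{\fg^-}\}$, so the $G$-invariant metric on $G^{-,0}/G^0$ exists. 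Lemma~\ref{normal} then applies to the induced system on $G^{-,0}$, and your end-game runs verbatim. The missing idea in your proposal is precisely this use of the centre plus induction to manufacture $G^-\subset Z(G)$.
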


\begin{proof}
By Theorem \ref{control1} we have that $G^{+, 0}\subset\AC$ which implies that, if $G=G^{+, 0}$ then $\AC=G$. 

For the converse we will proceed by induction on the dimension of $G$. If $\dim G=1$ then $G$ is abelian and the system on $G$ is conjugated to its linearization by the exponential map. The result follows then from Theorem 6 of \cite{Sontag} for Euclidean systems.

Let us assume then that the result is true for any nilpotent connected Lie group with dimension smaller than $n$ and consider a Lie group $G$ in the conditions of the theorem with $\dim G=n$. Since the center of $G$ is nontrivial, we have that $H=G/Z(G)$ is a nilpotent connected Lie group with $\dim H<n$. Also, since $Z(G)$ is $\varphi$-invariant there is a well defined induced linear system on $H$ that is semi-conjugated to the system on $G$ by the canonical projection $\pi:G\rightarrow H$ (see Proposition 4 of \cite{JPh1}). 

Since $\AC=G$ we have that $\pi(\AC)=H$. Since the linear systems are semi-conjugated we have that $\pi(\AC)$ is the reachable of the linear system on $H$ and by the inductive hypothesis we must have $H=H^{+, 0}$. By Lemma \ref{projection} it implies then that $G^-\subset Z(G)$ and by item 5. of Proposition \ref{subgroups} that $G^+$ is a normal subgroup of $G$. Using again Proposition 4 of \cite{JPh1} we have an induced linear systems on $G/G^+=G^{-, 0}$ that is semi-conjugated to the system on $G$.

Let $\AC^{-, 0}$ denotes the reachable set of the induced system on $G^{-, 0}$. Since $G^-\subset Z(G)$ we have that $\Ad(G^0)|_{\fg^-}=\{\mathrm{id}_{\fg^-}\}$ is  compact in $\mathrm{Gl}(\fg^-)$ and consequently that $G^{-, 0}/G^0$ admits a $G$-invariant Riemannian metric (see Proposition 3.16 of \cite{JCDE}). Moreover, by the semi-conjugation between the systems we have that $\AC^{-, 0}=G^{-, 0}$ which by the above corollary implies that $\AC^{-, 0}\cap G^-=G^-$ is a relatively compact set of $G$. Since $G^-$ is closed it must be a compact subgroup which by Lemma \ref{compact} implies that $G^-$ must be trivial showing that $G=G^{+, 0}$ as desired.
\end{proof}

We can then prove the main result for bounded systems on nilpotent Lie groups.

\begin{theorem}
\label{reciprocanilpotent}
The bounded linear system (\ref{linearsystem}) on the nilpotent Lie group $G$ is controllable if, and only if, $\AC$ is an open set and $G=G^0$.
\end{theorem}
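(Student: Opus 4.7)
The plan is to split the equivalence into two directions, both of which should follow quickly from the machinery already developed.

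For the sufficient direction, assume that $\AC$ is open and $G=G^0$. Since the system is bounded, the Remark following Theorem \ref{control1} gives that $\AC^*$ is open as well. The hypothesis $G=G^0$ says exactly that the derivation $\DC$ associated with $\XC$ has only eigenvalues with zero real part. Hence all three hypotheses of Theorem \ref{control1} are met and controllability follows immediately.

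For the necessary direction, assume the system is controllable. Then $\AC=G$, which is trivially open. The substantive claim is $G=G^0$. The idea is to apply Proposition \ref{reciprocanilpotent1} twice: once to the given system and once to the time-reversed system whose flow is $\varphi_t^*=\varphi_{-t}$ and whose reachable set is $\AC^*$. Controllability is symmetric, so $\AC=G$ forces $\AC^*=G$ as well (for any $g\in G$ one has $e\in\AC(g)$, which by the cocycle property translates into $g\in\AC^*$). Applying Proposition \ref{reciprocanilpotent1} to the original system yields $G=G^{+,0}$, and applying it to the reversed system, together with the identifications $\fg^+_*=\fg^-$, $\fg^-_*=\fg^+$, $\fg^0_*=\fg^0$ recorded just before Theorem \ref{control1}, yields $G=G^{-,0}$. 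Intersecting and invoking item 3 of Proposition \ref{subgroups} gives
\[
G \;=\; G^{+,0}\cap G^{-,0} \;=\; G^0,
\]
as required.

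The only mildly delicate point is verifying that controllability of the bounded system really does force $\AC^*=G$ so that Proposition \ref{reciprocanilpotent1} can be applied to the reversed system; this is where boundedness (and hence the openness transfer between $\AC$ and $\AC^*$) and the explicit relation $\phi^*_{t,u}=\phi_{-t,u}$ from the preamble to Theorem \ref{control1} are used. Once that is in hand, the argument is a clean symmetry argument, and the bulk of the work has already been done in Proposition \ref{reciprocanilpotent1}.
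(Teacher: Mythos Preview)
Your proof is correct and follows the same approach as the paper: sufficiency via Theorem \ref{control1}, necessity by applying Proposition \ref{reciprocanilpotent1} to both the system and its time-reversal and then intersecting $G^{+,0}\cap G^{-,0}=G^0$ via item 3 of Proposition \ref{subgroups}. The paper's proof is terser but structurally identical; your version simply makes explicit the intersection step and the passage from controllability to $\AC^*=G$ that the paper compresses into the phrase ``applying Proposition \ref{reciprocanilpotent1} above to $\AC$ and to $\AC^*$''.
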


\begin{proof}
By Theorem \ref{control1} we just have to prove the ``only if" part. If the linear system is controllable then $\AC$ is certainly open and $G=\AC\cap\AC^*$. Applying Proposition \ref{reciprocanilpotent1} above to $\AC$ and to $\AC^*$ we get then the desired.
\end{proof}

\begin{remark}
The above theorem generalizes Theorem 6 of \cite{Sontag} for Euclidean systems. 
\end{remark}

\begin{remark}
We should emphasize that the nilpotency of $G$ in the above results was used mainly to assure that the center $Z(G)$ is not trivial and use it in our induction process. If the group $G$ is solvable, not necessarely nilpotent, and the subgroup $G^0$ is compact a similar induction process could be done in order to prove that the conditions of Theorem \ref{control1} are also necessary. 
\end{remark}

\subsection{Examples}
\begin{example}Unrestricted Linear systems on Nilpotent Lie groups

Although the converse holds for restricted system on nilpotente Lie groups, for unrestricted system it does not, as the next example shows (Example 7.1 of \cite{JPh}): 
Consider the nilpotent Heisenberg group
$$G=\left\{\left(\begin{array}{ccc}
1  &  y  &  z \\
0  &  1  &  x \\
0  &  0  &  1
\end{array}\right);\;\;\;x, y, z\in\R
\right\}.$$
Its Lie algebra is generated by the right invariant vector fields
$$X=\left(\begin{array}{ccc}
0  &  0  &  0 \\
0  &  0  &  1 \\
0  &  0  &  0
\end{array}\right),
\;\;
Y=\left(\begin{array}{ccc}
0  &  1  &  x \\
0  &  0  &  0 \\
0  &  0  &  0
\end{array}\right), \;\;
Z=\left(\begin{array}{ccc}
0  &  0  &  1 \\
0  &  0  &  0 \\
0  &  0  &  0
\end{array}\right),$$
satisfying $[X, Y]=Z$. Writing in natural coordinates we have
$$X=\frac{\partial}{\partial x}, \;\;\;\;Y=\frac{\partial}{\partial y}+x\frac{\partial}{\partial z}, \;\;\;\;Z=\frac{\partial}{\partial z}.$$

Associated with the derivation $\DC:\fg\rightarrow\fg$ defined as $\DC(X)=Y$, $\DC(Y)=X$ and $\DC(Z)=0$ we have the linear vector field
$$\XC=y\frac{\partial}{\partial x}+x\frac{\partial}{\partial y}+\frac{1}{2}(x^2+y^2)\frac{\partial}{\partial z},$$
that is, $\DC=\ad(\XC)$. As shown in \cite{JPh}, the linear system
$$\dot{g}=\XC(g)+u_1X(g)+u_2Z(g)$$
is controllable on $G$. However, it is easy to see that $X-Y$ and $X+Y$ are eigenvectors associated respectively with the eigenvalues $-1$ and $1$ and consequently neither $G^-$ nor $G^+$ are trivial.
\end{example}

\begin{example} Bounded linear system on solvable Lie group.

Although we do not know if Theorem \ref{reciprocanilpotent} also holds on solvable Lie groups without the assumption that the subgroup $G^0$ is compact, this example shows one case where it holds. 
Consider the connected Lie group
$$G=\left\{\left(
\begin{array}{cc}
x  &   y\\
0  &   1
\end{array}\right)
;\;\;\;(x, y)\in\R_{>0}\times\R
\right\}.$$

Its Lie algebra $\fg$ is the solvable right invariant Lie algebra generated by 
$$X=\left(
\begin{array}{cc}
-1  &   0\\
0  &   0
\end{array}\right) \hspace{2cm}
 Y=\left(
 \begin{array}{cc}
 0  &   1\\
 0  &   0
 \end{array}\right)
 $$
 with $[X, Y]=Y$.
 For any $g=\left(
 \begin{array}{cc}
 x  &   y\\
 0  &   1
 \end{array}\right)\in G$ we have that 
$$X(g)=\left(
\begin{array}{cc}
-1  &   0\\
0  &   0
\end{array}\right)g= \left(
\begin{array}{cc}
-x  &   -y\\
0  &   0
\end{array}\right)\hspace{1cm}
Y(g)=\left(
\begin{array}{cc}
0  &   1\\
0  &   0
\end{array}\right)g=\left(
\begin{array}{cc}
0  &   1\\
0  &   0
\end{array}\right).
$$ 
Consider the derivation $\DC$ that on the basis $\{X, Y\}$ has matrix given by 
$$\DC=\left(
\begin{array}{cc}
0  &   0\\
0  &   2
\end{array}\right),$$
that is, $\DC=2\ad(X)$. The related linear vector field is given by 
$$\XC(g)=2(gX-Xg)=2yY$$ 
for $g\in G$ as above. Consider then the linear system on $G$ given by
\begin{equation}
\label{afim}
\dot{g}=\XC(g)+uZ(g),
\end{equation}
where $u\in[-1, 1]$ and $Z=X+Y$. We have that
$$\mathrm{span}\{\DC^j(Z), j\in\N_0\}=\mathrm{span}\{Z, \DC(Z)\}=\mathrm{span}\{X+Y, 2Y\}=\fg$$
which implies that the above system satisfy the ad-rank condition (\ref{ad-rank}) and consequently that $\AC$ and $\AC^*$ are open sets. In coordinates the system is written as
$$\dot{x}=-ux\;\;\;\;\;\mbox{ and }\;\;\;\;\;\dot{y}=(2-u)y+u.$$

By our choices we have that

$$G^+=\left\{\left(
\begin{array}{cc}
1  &   y\\
0  &   1
\end{array}\right)
;\;\;\;y\in\R
\right\} \;\;\;\mbox{ and }\;\;\;G^0=\left\{\left(
\begin{array}{cc}
x  &   0\\
0  &   1
\end{array}\right)
;\;\;\;x\in\R_{>0}
\right\}.$$
Moreover $G^0$ is certainly a noncompact group, since it is homeomorphic to $\R_{>0}$. We will show that $\AC^*\neq G$ which implies that the system cannot be controllable. Since $u\in[-1, 1]$ we have that 
$$\dot{y}\leq 0\iff 0\geq (2-u)y+u\geq y+u\geq y-1\implies y\leq 1$$
therefore
$$y>1\implies \dot{y}>0$$
and the system cannot be controllable to $e=\left(
\begin{array}{cc}
1  &   0\\
0  &   1
\end{array}\right)$, that is, $\AC^*\neq G$ showing the desired.
\end{example}

\begin{remark}
The above example is a particular case of Theorem 3. of \cite{JPhDM} for unrestricted linear systems.
\end{remark}

\begin{example} The ad-rank condition.

This example shows that the condition on the openness of $\AC$ is weaker than the ad-rank condition:

Let $A, B\in sl(2)$ be given by
$$ A=\left(
\begin{array}{cc}
0  &   1\\
-1  &   0
\end{array}\right) \hspace{.5cm}\mbox{ and }\hspace{.5cm}
B=\left(
\begin{array}{cc}
-1  &   0\\
0  &   1
\end{array}\right).
$$
The linear vector field on the semisimple Lie group $Sl(2)$ given by $\XC(g):=Ag-gA$ has associated derivation given by $\DC=-\ad(A)$. Moreover, 
$$\DC(B)=\left(
\begin{array}{cc}
0  &   2\\
2  &   0
\end{array}\right) \hspace{.5cm}\mbox{ and }\hspace{.5cm}
\DC^2(B)=4B$$
which implies that the linear system
$$\dot{g}(t)=\XC(g(t))+u(t)Bg(t)$$
does not satisfy the ad-rank condition. However the above system is controllable which implies, in particular, that $\AC$ is open (for the details see Example 5 of \cite{JPh}).
\end{example}

\subsubsection*{Acknowledgments} I would like to thank Prof. Luiz San Martin and Dr. Christoph Kawan for helping with my doubts about Lie and Control Theory and also Prof. Philippe Jouan for discussing Example 2. above with me.


\begin{thebibliography}{99}

\bibitem {VAEK}\textsc{Ayala, V. and K. Ey\"up}, \emph{Null controllability on Lie groups}, Proyecciones Journal of Mathematics vol. 32, n\textsuperscript{o}1 (2013), pp. 61-72.

\bibitem {VASM}\textsc{Ayala, V and L.A.B. San Martin}, \emph{Controllability properties of a class of control systems on Lie groups}, Lecture Notes in Control and Information Sciences 258 (2001), pp. 83-92.

\bibitem {VAJT}\textsc{Ayala, V. and J. Tirao}, \emph{Linear control systems on Lie groups and Controllability}, Eds. G. Ferreyra et al., Amer. Math. Soc., Providence, RI, 1999.

\bibitem {FCWK}\textsc{Colonius, F. and C. Kliemann}, \emph{The Dynamics of Control}, Systems $\&$ Control: Foundations $\&$ Applications. Birk\"auser Boston, Inc., Boston, MA, 2000.

\bibitem {JCDE}\textsc{Cheeger J. and D. G. Ebin}, \emph{Comparison Theorems in Riemannian Geo\-metry}, North-Holland, Amsterdam 1975.

\bibitem {DaSilva}\textsc{Da Silva, A. J.}, \emph{Outer Invariance Entropy for Linear Systems on Lie Groups}. Journal of Dynamics and Control Systems, 52 (2014) 3917-3934.


\bibitem {CKawan}\textsc{Kawan, C.}, \emph{ Invariance Entropy for Deterministic Control Systems - An Introduction}, Lecture Notes in Mathematics 2089. Berlin: Springer (2013).
 
\bibitem {JPh}\textsc{Jouan, Ph.}, \emph{Controllability of Linear Systems on Lie group},  Journal of Dynamics and Control Systems, 17 (2011) 591-616.

\bibitem {JPhDM}\textsc{Jouan, Ph. and Dath M.}, \emph{Controllability of Linear Systems on low dimensional Nilpotent and Solvable Lie groups},  Journal of Dynamics and Control Systems (2014).

\bibitem {JPh1}\textsc{Jouan, Ph.}, \emph{Equivalence of Control Systems with Linear Systems on Lie Groups and Homogeneous Spaces},  ESAIM: Control Optimization and Calculus of Variations, 16 (2010) 956-973.

\bibitem {Knapp}\textsc{Knapp, A. W.}, \emph{Lie Groups Beyond an Introduction}, Second Edition, Birkh\"auser, Berlin, (2004).

\bibitem {ALEB}\textsc{Onishchik, A. L. and E. B. Vinberg}, \emph{Lie Groups and Lie Algebras III - Structure of Lie Groups and Lie Algebras},  Berlin: Springer (1990).

\bibitem {SM1}\textsc{San Martin, L. A. B.}, \emph{Algebras de Lie}, Second Edition, Editora Unicamp, (2010).

\bibitem {SM2}\textsc{San Martin, L.A.B.}, \emph{Grupos de Lie}, unpublished.


\bibitem {Sontag}\textsc{Sontag, E. D.}, \emph{Mathematical Control Theory. Deterministic finite- dimensional systems},  Springer-Verlag, New York, 1998.

\bibitem {MW}\textsc{W\"ustner, M.}, \emph{On the surjectivity of the exponential function on solvable Lie groups}, Math. Nachr., 192, (1998), pp. 255-266.





\end{thebibliography}
\end{document}